\definecolor{my-link}{rgb}{0.5,0.0,0.0}
\definecolor{my-blue}{rgb}{0.0,0.0,0.6}
\definecolor{my-red}{rgb}{0.5,0.0,0.0}
\definecolor{my-green}{rgb}{0.0,0.5,0.0}
\definecolor{nicos-red}{rgb}{0.75,0.0,0.0}
\definecolor{really-light-gray}{gray}{0.8}
\definecolor{darkgreen}{rgb}{0.0,0.5,0.0}
\definecolor{darkblue}{rgb}{0.0,0.0,0.3}
\definecolor{nicosred}{rgb}{0.65,0.1,0.1}
\definecolor{light-gray}{gray}{0.7}
\newcommandx{\addmath}[2][1=]{\todo[linecolor=red,backgroundcolor=red!25,bordercolor=red,#1]{#2}}
\newcommandx{\fixtext}[2][1=]{\todo[linecolor=blue,backgroundcolor=blue!25,bordercolor=blue,#1]{#2}}
\newcommandx{\note}[2][1=]{\todo[linecolor=yellow,backgroundcolor=yellow!25,bordercolor=yellow,#1]{#2}}
\numberwithin{figure}{section}
\numberwithin{equation}{section}
\definecolor{sussexg}{rgb}{0,0.7,0.7}
\definecolor{sussexp}{rgb}{0.4,0,0.4}
\definecolor{sussexb}{rgb}{0.4,0.4,0.7}
\definecolor{mygray}{rgb}{0.75,0.75,0.75}
\newcommand{\E}{\mathbb{E}}
\DeclareMathOperator{\bbE}{\mathbb{E}}
\DeclareMathOperator{\bbN}{\mathbb{N}}
\DeclareMathOperator{\bbP}{\mathbb{P}}
\DeclareMathOperator{\bbR}{\mathbb{R}}
\DeclareMathOperator{\bbZ}{\mathbb{Z}}
\DeclareMathOperator{\lf}{\lfloor}
\DeclareMathOperator{\rf}{\rfloor}
\DeclareRobustCommand{\cev}[1]{%
  \mathpalette\do@cev{#1}%
}
\newcommand{\do@cev}[2]{%
  \fix@cev{#1}{+}%
  \reflectbox{$\m@th#1\vec{\reflectbox{$\fix@cev{#1}{-}\m@th#1#2\fix@cev{#1}{+}$}}$}%
  \fix@cev{#1}{-}%
}
\newcommand{\fix@cev}[2]{%
  \ifx#1\displaystyle
    \mkern#22mu
  \else
    \ifx#1\textstyle
      \mkern#22mu
    \else
      \ifx#1\scriptstyle
        \mkern#22mu
      \else
        \mkern#22mu
      \fi
    \fi
  \fi
}
\begin{document}

\renewcommand{\thefootnote}{\fnsymbol{footnote}}
\footnotetext[1]{Department of Mathematics, University of Utah, 155 South 1400 East, Salt Lake City, UT 84109, USA. \EMAIL{janjigia@math.utah.edu}, \url{http://www.math.utah.edu/\textasciitilde janjigia}}

\section{Introduction and results}
Brownian directed percolation is a model of directed last passage percolation in a white noise environment on $\bbR \times \bbZ_+$. Given a family of i.i.d. standard two-sided Brownian motions $\{B_i\}_{i=0}^\infty$, we take the convention that $B_k(s,t) = B_k(t) - B_k(s)$ and define last passage times from $(u,k)$ to $(t,n)$, with $u \leq t \in \bbR$ and $k \leq n \in \bbZ_+$, by
\begin{align*}
L_{k,n}(u,t) &:= \displaystyle \sup_{u = s_{k-1} < s_k < \dots < s_{n-1} < s_n = t}\left\{ \sum_{j=k}^n B_{j}(s_{j-1},s_j) \right\}.
\end{align*}
We will distinguish the last passage times from $(0,1)$ to $(t,n)$ with the notation $L_n(t) = L_{1,n}(0,t):$
\begin{align}
L_n(t) &= \max_{0 < s_1 < \dots < s_{n-1} < t }\left\{B_1(0,s_1) + \dots + B_n(s_{n-1},t)\right\} \label{eq:BDPdef}.
\end{align}

Brownian directed percolation was originally introduced in the queueing literature in \cite{GW91}. It is rich with connections to other families of models including directed polymers and last passage percolation \cite{BS05,BM05,Ib11,OY01}, queueing theory \cite{GW91,OY01} and random matrix theory \cite{Bar01,GTW01,OY02}. The random matrix connection is of particular interest: a distributional equivalence between the last passage time $L_n(t)$ and the largest eigenvalue of an appropriate GUE matrix was discovered independently by Baryshnikov  \cite[Theorem 0.7]{Bar01} and Gravner, Tracy, and Widom \cite{GTW01} and soon after extended by O'Connell and Yor \cite{OY02}.

As is typical in percolation models, some of the main questions of interest concern the behavior of the passage times $L_n(nt)$ as $n\to\infty$. Superadditivity implies the existence of an almost sure law of large numbers limit $L_n(nt)/n \to c(t)$ as $n \to \infty$, for each $t>0$. Scale invariance inherited from Brownian motion, $L_n(t) \stackrel{d}{=} \sqrt{t} L_n(1)$, then shows that this limit must be of the form $c(t) = c \sqrt{t}$. One can either appeal to classical results in random matrix theory or arguments similar to those later in this paper to determine that $c=2$. The uniform version of this law of large numbers is due to Hambly, Martin, and O'Connell \cite{HMO02}, Theorem 8]. Through the connection to random matrices, the fluctuations around this limit are given by the Tracy-Widom GUE distribution \cite{TW94}. 

Large deviation principles are known at both rates $n$ and $n^2$. These results depend on the large deviation principle for the empirical distribution of a random matrix drawn from the Gaussian Unitary Ensemble, which is due to Ben Arous and Guionnet \cite[Theorem 1.3]{Ben-Gui-97}. The rate $n^2$ large deviation principle corresponds to the lower tail and was derived first, using the fact that a lower tail large deviation of the largest eigenvalue imposes a constraint on all of the eigenvalues. The rate $n$ upper tail large deviation rate function can be derived as in the computation of the corresponding rate function for the largest eigenvalue of a random matrix drawn from the Gaussian Orthogonal Ensemble in \cite[Theorem 6.2]{BDG01}. The precise expression here can be found in the lecture notes of Ledoux on deviation inequalities for largest eigenvalues \cite[(1.25)]{Ledoux}.
\begin{theorem} \label{thm:BDPRTRF}
For any $r \geq 0$, 
\begin{align*}
\lim_{n \to \infty} - n^{-1} \log \bbP\left( n^{-1} L_n( n)  \geq 2(1 + r)  \right) &= 4 \int_0^r \sqrt{x(x+2)}dx  :=J_{GUE}(r).
\end{align*}
\end{theorem}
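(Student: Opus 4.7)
I would attempt this via the stationary version of Brownian directed percolation, a classical tool in this integrable setting.

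Fix $\lambda > 1$ and define the stationary last passage time $L_n^\lambda(n)$ by prepending a Brownian motion $B_0$ with drift $\lambda$ and augmenting the supremum in \eqref{eq:BDPdef} to allow an initial sojourn $s_0 \in [0,n]$ on this boundary row. Restriction to paths with $s_0 = 0$ gives the pathwise inequality $L_n(n) \leq L_n^\lambda(n)$. A direct variational argument using the interior shape $L_n(cn)/n \to 2\sqrt{c}$ yields the LLN $L_n^\lambda(n)/n \to g(\lambda) := \lambda + 1/\lambda$ for $\lambda \geq 1$, with optimal exit from the boundary at proportion $\alpha^* = 1 - 1/\lambda^2$. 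A Burke-type identity (the output of a stationary Brownian queue is again Brownian with the same drift) should then give a tractable form for the moment generating function of $L_n^\lambda(n)$, so that $n^{-1}\log \E[e^{\theta L_n^\lambda(n)}] \to \Lambda_\lambda(\theta)$ explicitly.

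For the upper bound, Chernoff on the stationary model gives
\[
\P\bigl(L_n(n) \geq 2(1+r)n\bigr) \;\leq\; \P\bigl(L_n^\lambda(n) \geq 2(1+r)n\bigr) \;\leq\; e^{-2(1+r)n\theta}\,\E\bigl[e^{\theta L_n^\lambda(n)}\bigr],
\]
and optimizing over $\lambda, \theta$ should recover $J_{GUE}(r)$, with saddle at $\lambda^*(r) = 1 + r + \sqrt{r(r+2)}$ (the unique $\lambda > 1$ solving $g(\lambda^*) = 2(1+r)$). A hyperbolic substitution such as $1+x = \cosh u$ rewrites $4\int_0^r\sqrt{x(x+2)}\,dx$ as $2\bigl[(r+1)\sqrt{r(r+2)} - \cosh^{-1}(r+1)\bigr]$, which should coincide with the Legendre-dual form.

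For the matching lower bound, I would use that at $\lambda^*(r)$ the stationary model has $L_n^{\lambda^*}(n)/n \to 2(1+r)$, so the target event is typical under it. Decomposing $L_n^{\lambda^*}(n) = \sup_{\tau \in [0,n]}\bigl[B_0^{\lambda^*}(\tau) + L_n(\tau, n)\bigr]$, the boundary sojourn is Gaussian with explicit MGF, and the cost of forcing $L_n(\tau, n)$ itself to be atypical on the complementary interval should yield a self-consistent bound saturating at $J_{GUE}(r)$.

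\textbf{Main obstacle.} The hardest step is the matching lower bound. The stationary model's upper-tail Chernoff is essentially direct, but turning the decomposition above into a sharp lower bound requires careful treatment of the joint distribution of boundary sojourn and interior passage, and verifying that the resulting variational problem saturates exactly at $4\int_0^r\sqrt{x(x+2)}\,dx$ rather than a looser convex dual is where the real work lies.
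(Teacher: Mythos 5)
You have correctly identified the central tool (the stationary Brownian queue and its Burke property) and the role of the boundary drift $\lambda^*(r)$, but the proposal has a real gap in the lower bound, and it also takes a genuinely different route than the paper for the upper bound.

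For the Chernoff direction you bound $L_n(n) \leq L_n^\lambda(n)$ and hope to optimize over both $\theta$ and $\lambda$. The paper instead computes the Lyapunov exponent $\Lambda_{s,t}(\lambda)=\lim_n n^{-1}\log\E[e^{\lambda L_{\lfloor ns\rfloor}(nt)}]$ of the \emph{point-to-point} model itself. It gets this by writing the stationary quantity $\sum_k q_k^\mu(t)$ as a max over exit points (the coupling \eqref{eq:BDPcoupling}), which yields a two-sided variational identity $s\log\frac{\mu}{\mu-\lambda}=\sup_{r,u}\{\cdots+\Lambda_{\cdot,\cdot}(\lambda)\}$ for every $\mu>\lambda$; this over-determines $\Lambda$ and is inverted explicitly. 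Note that this requires knowing \emph{a priori} that the limit defining $\Lambda$ exists, which comes from superadditivity of $L_{\lfloor ns\rfloor}(nt)$ — a step your outline skips.

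The bigger issue is the matching lower bound (the upper bound on $J_{GUE}$), which you yourself flag as the hard part. Your sketch — that $L_n^{\lambda^*}(n)/n\to 2(1+r)$ so the target is ``typical'' under the stationary measure, and then decompose $L_n^{\lambda^*}(n)=\sup_\tau\bigl[B_0^{\lambda^*}(\tau)+L_n(\tau,n)\bigr]$ — does not produce a lower bound on $\P(L_n(n)\ge 2(1+r)n)$. The pathwise inequality runs the wrong way: $L_n(n)\le L_n^{\lambda^*}(n)$, so typicality of the latter at level $2(1+r)n$ tells you nothing about the lower tail of the former, and the optimal exit $\alpha^*n>0$ means the stationary event is carried by boundary weight, not by an atypical interior passage. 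As written, the step ``forcing $L_n(\tau,n)$ itself to be atypical\ldots should yield a self-consistent bound'' is circular, and it is unclear how to close it.

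The paper circumvents this entirely. First, superadditivity (Proposition \ref{prop:BDPLJexist}) gives existence and \emph{convexity} of $J_{s,t}(r)$, and also existence of $\Lambda_{s,t}(\lambda)$. Next, a Varadhan-type lemma (Lemma \ref{lem:BDPvaradhan}) — proved by exponential Markov on one side and a partition of the range of $L_{\lfloor ns\rfloor}(nt)$ on the other, with no use of the stationary model — shows $\sup_r\{\lambda r-J_{s,t}(r)\}=\Lambda_{s,t}(\lambda)$. Finally, Fenchel--Moreau and convexity of $J$ invert this to $J_{s,t}(r)=\sup_{\lambda\ge 0}\{\lambda r-\Lambda_{s,t}(\lambda)\}$, which after calculus gives $J_{GUE}$. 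So the matching ``lower bound'' never requires constructing an explicit atypical scenario; it falls out of convex duality once $\Lambda$ is known exactly. If you want to salvage your outline, you should add the superadditivity step up front to get convexity of the rate function and existence of both limits, and then replace the decomposition-based lower bound with the Legendre--Fenchel inversion.
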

In this paper, we provide a simple alternative proof of Theorem \ref{thm:BDPRTRF} using ideas which have previously been used to derive large deviation principles for the free energy of certain solvable positive and zero temperature directed polymer models in \cite{Cie-Geo-18-,Emr-Jan-17,GS13,Jan15,Sep98c,Sep98b}. The proof presented below shows that the limit in the statement of the theorem exists by subadditivity argument, from which we immediately derive the following corollary, which is also observed in \cite[(2.6)]{Ledoux}:
\begin{corollary} \label{cor:BDPfinitenbound}
For any $n$ and $r \geq 0$,
\begin{align*}
\bbP\left(n^{-1}L_n(n) \geq 2(1 + r) \right) \leq e^{-n J_{GUE}(r)}.
\end{align*}
\end{corollary}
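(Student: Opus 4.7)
The plan is to show that $f(n) := -\log \bbP(L_n(n) \geq 2n(1+r))$ is a subadditive sequence in $n$ and to combine this with Theorem \ref{thm:BDPRTRF}. First I would establish the subadditivity inequality. Looking at the variational formula \eqref{eq:BDPdef} for $L_{n+m}(n+m)$, I would restrict the supremum to those paths whose $n$-th jump is pinned at $s_n = n$. This factors the weight along the path into two independent contributions: a piece built from $B_1, \dots, B_n$ on the interval $[0,n]$, which is distributed exactly as $L_n(n)$, and a piece built from $B_{n+1}, \dots, B_{n+m}$ on $[n, n+m]$, which by stationarity of Brownian increments is distributed as $L_m(m)$ and is independent of the first piece.

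Next I would read off the subadditive bound. Since the restricted supremum is no larger than the full one, we get an almost-sure lower bound $L_{n+m}(n+m) \geq \widetilde L_n + \widetilde L_m$ with independent $\widetilde L_n \stackrel{d}{=} L_n(n)$ and $\widetilde L_m \stackrel{d}{=} L_m(m)$. Requiring each summand to clear its own threshold forces the total above $2(n+m)(1+r)$, so independence yields
\[
\bbP\bigl(L_{n+m}(n+m) \geq 2(n+m)(1+r)\bigr) \geq \bbP\bigl(L_n(n) \geq 2n(1+r)\bigr)\, \bbP\bigl(L_m(m) \geq 2m(1+r)\bigr),
\]
i.e.\ $f(n+m) \leq f(n) + f(m)$.

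Finally, I would invoke Fekete's lemma to obtain $\lim_{n\to\infty} f(n)/n = \inf_{n \geq 1} f(n)/n$, and identify this limit with $J_{GUE}(r)$ via Theorem \ref{thm:BDPRTRF}. Since the Cesàro limit of a subadditive sequence is its pointwise infimum, this forces $f(n) \geq n J_{GUE}(r)$ for every $n$, which is exactly the claimed bound. Given Theorem \ref{thm:BDPRTRF}, the corollary presents no real obstacle: its only geometric content is the subadditivity inequality, and that in turn requires only the observation that one can evaluate the supremum in \eqref{eq:BDPdef} at a single well-chosen intermediate point instead of optimizing over all choices.
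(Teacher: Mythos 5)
Your proof is correct and follows the same route as the paper: the paper establishes subadditivity of $(s,t,r)\mapsto -\log\bbP(L_{\lfloor s\rfloor}(t)\geq r)$ in Proposition \ref{prop:BDPLJexist} via exactly the splitting you describe (pinning an interior point to decouple the passage time into a sum of two independent copies), and then Remark \ref{rem:BDPfinitenbound} applies Fekete's lemma to read off $J_{1,1}(r)=\inf_n -n^{-1}\log\bbP(L_n(n)\geq nr)$ and hence the finite-$n$ bound. The only cosmetic difference is that you pin at $s_n=n$, which makes the second piece exactly $\stackrel{d}{=}L_m(m)$ using $B_{n+1},\dots,B_{n+m}$ on $[n,n+m]$, whereas the paper's decomposition uses $L_{\lfloor s_1\rfloor,\lfloor s_1+s_2\rfloor}(t_1,t_1+t_2)$ and invokes monotonicity of $L_n$ in $n$ to handle the extra shared line; your choice is marginally cleaner in the integer case but the argument is the same.
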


The key input needed for the approach taken in this paper is an analogue of Burke's theorem from queueing theory, which allows us to define a version of the percolation model in which appropriate differences of passage times are stationary.

\subsection{The Burke property and the increment stationary model}
Following the notation in \cite{OY01},  for each $\mu > 0$, we define
\begin{align*}
q_1^\mu(t) = \sup_{-\infty < s \leq t}\left\{B_0(s,t) + B_1(s,t) - \mu(t-s)\right\}, \qquad
d_1^\mu(s,t) = B_0(s,t) + q_1^\mu(s) - q_1^\mu(t),
\end{align*}
and recursively for $k \geq 2$,
\begin{align*}
q_k^{\mu}(t) = \sup_{-\infty < s \leq t}\left\{d_{k-1}(s,t) + B_k(s,t) - \mu(t-s)\right\}, \qquad d_k^\mu(s,t) = d_{k-1}^\mu(s,t) + q_k^{\mu}(s) - q_k^{\mu}(t).
\end{align*}
These processes arise naturally in a heavy traffic limit in queueing theory. In that context, the increments of the Brownian motion $B_0$ represent the inter-arrivals process at the first queueing station, the increments of $B_k(t) - \mu t$  are the inter-service times at station $k$, the $d_k$ represents the inter-departure times from the $k^{th}$ station, and $q_k$ is the queue length process at station $k$. These definitions follow naturally from applying Donsker's theorem to the corresponding definitions in the classical M/M/1 queue. As is shown in \cite[Theorem 2, Section 4]{OY01} (see also \cite{Har-Wil-92} for more general results), applying Donsker's theorem to Burke's theorem for the M/M/1 queue leads to the following Brownian analogue of Burke's theorem:
\begin{theorem}\label{thm:BDPBurke}
For each $t \geq 0$, the family $\{q_k^{\mu}(t)\}_{k=1}^\infty$ consists of i.i.d.\ Exponential random variables with mean $\mu^{-1}$.
\end{theorem}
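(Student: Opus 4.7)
My plan is to prove the theorem by induction on $k$, with the main technical input being a Brownian analogue of the single-station Burke property. Specifically, I would carry the following induction hypothesis at level $k$: the output process $(d_k^\mu(s,t))_{s \leq t}$ is a standard two-sided Brownian motion independent of $\{B_j\}_{j \geq k+1}$, and for each $t$ the queue length $q_k^\mu(t)$ is exponential with mean $\mu^{-1}$ and independent of $(d_k^\mu(s,t))_{s \leq t}$ and of $q_1^\mu(t),\ldots,q_{k-1}^\mu(t)$. The theorem's claim is an immediate consequence of this hypothesis.

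For the base case, the marginal distribution of $q_1^\mu(t)$ follows from a short direct computation: setting $W(u) := B_0(t-u,t) + B_1(t-u,t)$ for $u \geq 0$, time-reversal invariance of Brownian motion gives that $W$ is a Brownian motion with variance $2u$, so
\[
q_1^\mu(t) \;=\; \sup_{u \geq 0}\{W(u) - \mu u\},
\]
which is exponential with rate $\mu$ by the classical supremum-of-BM-with-drift calculation (reflection principle, or Laplace transform via optional stopping).

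The main obstacle is establishing the single-station Burke property: that $d_1^\mu$ is itself a standard two-sided Brownian motion and that $q_1^\mu(t)$ is independent of $(d_1^\mu(s,t))_{s \leq t}$. The route I would pursue is reversibility. Setting $X(t) := B_0(t) + B_1(t) - \mu t$, the Skorokhod identity $q_1^\mu(t) = X(t) - \inf_{s \leq t} X(s)$ lets one rewrite $d_1^\mu$ and $q_1^\mu$ jointly as deterministic functionals of $(B_0,B_1)$; the key identity to establish is that for each $t_0$ the time-reversed triple $(d_1^\mu(t_0-\cdot,t_0),\,B_0(t_0-\cdot,t_0),\,q_1^\mu(t_0))$ has the same joint law as $(B_0,\,d_1^\mu,\,q_1^\mu(0))$ in the forward direction under stationarity. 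This symmetry forces $d_1^\mu$ to be a standard BM, while independence of $q_1^\mu(t)$ from past departures follows because past departures become future arrivals under reversal and are therefore independent of the queue state at the new origin. Once this property is in hand, the induction step is automatic, since $q_k^\mu(t)$ is constructed from $(d_{k-1}^\mu,B_k)$ in exactly the same way $q_1^\mu(t)$ is constructed from $(B_0,B_1)$, and the inductive independence statements close off propagation of the property to level $k$. As a fallback, if the reversibility bookkeeping becomes unwieldy, one could instead deduce the statement by Donsker's theorem from Burke's theorem for the M/M/1 queue, though care is required in transferring joint distributional identities in the scaling limit.
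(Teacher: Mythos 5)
The paper does not prove Theorem \ref{thm:BDPBurke} at all; it is imported verbatim from O'Connell and Yor \cite[Theorem 2, Section 4]{OY01}, with the paper merely remarking that it can be seen as a Donsker limit of the classical M/M/1 Burke theorem. Your proposed argument is nevertheless correct in outline, and it is in fact the route O'Connell and Yor take in the cited reference: the reflected representation $q_1^\mu(t) = X(t) - \inf_{s\le t}X(s)$ with $X(t) = B_0(t)+B_1(t)-\mu t$, the marginal computation via the supremum of drifted Brownian motion, the single-station output theorem proved by time reversal, and then induction over stations. So you have essentially reconstructed the proof that the paper chose to cite rather than reproduce, which is a reasonable thing to do.

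One detail to tighten when writing this out: your stated inductive invariant records that $q_k^\mu(t)$ is independent of $(d_k^\mu(s,t))_{s\le t}$ and of $q_1^\mu(t),\dots,q_{k-1}^\mu(t)$, but for the induction to close you need the stronger statement that the full vector $(q_1^\mu(t),\dots,q_k^\mu(t))$ is \emph{jointly} independent of $(d_k^\mu(s,t))_{s\le t}$, with everything so far also independent of $\sigma(B_j : j\ge k+1)$. Otherwise, when you apply the single-station Burke property at level $k+1$ to the pair $(d_k^\mu, B_{k+1})$, you cannot conclude that $q_{k+1}^\mu(t)$ is independent of $(q_1^\mu(t),\dots,q_k^\mu(t))$ rather than merely of each one individually, nor that the invariant regenerates at level $k+1$. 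This is a routine strengthening and the proof goes through once it is made explicit.
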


To build the increment stationary model, define a family of last passage times for $n \in \bbN$ and $t \in \bbR$ by
\begin{align} 
L_n^\mu(t) &=\displaystyle \sup_{- \infty < s_0< s_1  < \dots< s_{n-1} < s_n = t}\left\{\mu s_0 - B_0(s_0) + \sum_{j=1}^n B_j(s_{j-1},s_j)\right\} \label{eq:statdef} \\
&= \sup_{-\infty < s_0 < t}\left\{\mu s_0 - B_0(s_0) + L_{1,n}(s_0,t)\right\}. \notag
\end{align}
With these definitions, an induction argument shows that
\begin{align} \label{eq:BDPstatdecomp}
\sum_{k=1}^n q_k^{\mu}(t) &= B_0(t) - \mu t + L_n^\mu(t).
\end{align}
In particular, $\sum_{k=1}^n q_k^{\mu}(0) =  L_n^\mu(0).$ We think of paths in this extended directed percolation model as being indexed by the points where they exit the lines $\{0,\dots,n\}$. By grouping paths into those that exit line $0$ before time $0$ and those that exit after, we obtain
\begin{align}
L_n^\mu(t) &= \max_{0 \leq s_0 \leq t}\left\{\mu s_0 - B_0(s_0) + L_{1,n}(s_0,t)\right\} \vee \max_{1 \leq j \leq n}\left\{L_j^\mu(0) + L_{j,n}(0,t) \right\}. \label{eq:BDPstatcoupling}
\end{align}
The decomposition in (\ref{eq:BDPstatcoupling}) can be viewed as describing a `stationary' point-to-point polymer on $\bbR_+ \times \bbZ_+$ with i.i.d.~Exponential boundary conditions $\{L_{n+1}^\mu(0) - L_n^\mu(0)\}_{n \in \bbN}$ on the vertical axis and drifted Brownian boundary conditions $\{\mu t - B_0(t)\}_{t\geq 0}$ on the horizontal axis. The model is stationary in the sense that $\{L_{n+1}^\mu(t) - L_{n}^\mu(t)\}_{n\in \bbZ_+} =\{q_{n+1}^{\mu}(t) : n \in \bbZ_+\}$ is an i.i.d.\ Exponential family for each $t > 0$. We will combine the queueing picture with this finite $n$ variational problem in order to obtain a variational problem for the Lyapunov exponents in this model which will allow us to prove Theorem \ref{thm:BDPRTRF}. It is convenient to write the equality in \eqref{eq:BDPstatcoupling} in a way that separates the terms $\sum_{k=1}^n q_k^{\mu}(t)$ and $ B_0(t) - \mu t $, which are not independent:
\begin{align} \label{eq:BDPcoupling}
\sum_{k=1}^n q_k^{\mu}(t) &= \max_{0 \leq s_0 \leq t}\left\{\mu (s_0 - t) + B_0(t) - B_0(s_0) + L_{1,n}(s_0,t)\right\}\\
&\vee \max_{1 \leq j \leq n}\left\{B_0(t) - \mu t + \sum_{k=1}^j q_k^{\mu}(0) + L_{j,n}(0,t) \right\}. \notag
\end{align}
The key point in this decomposition is that for each $s_0 > 0$, the random variables $ B_0(t) - B_0(s_0)$ and $L_{1,n}(s_0,t)$ are independent and for each $j \in \{1, \dots, n\}$, the random variables $B_0(t)$, $\sum_{k=1}^j q_k^{\mu}(0)$, and $ L_{j,n}(0,t)$ are mutually independent. This independence can be seen by recalling that the Brownian motions $\{B_i\}_{i=0}^\infty$ are independent and observing that $\sigma(B_i(s) : s \leq 0, i \in \bbZ_+)$ and $\sigma(B_i(s) : s \geq 0, i \in \bbZ_+)$ are independent. This decomposition will lead to a variational problem which can be used to prove Theorem \ref{thm:BDPRTRF}. Once we have proven Theorem \ref{thm:BDPRTRF}, we can bootstrap that result and the decomposition in (\ref{eq:BDPstatcoupling}) to compute the corresponding positive moment Lyapunov exponents for the stationary model.
\begin{theorem} \label{thm:statLyapunov}
For each $\mu,s,t>0$ and $\lambda \geq 0$,
\begin{align*}
&\lim_{n\to\infty} \frac{1}{n}\log \bbE\left[e^{\lambda L_{\lf ns \rf}^\mu(nt)}\right] \\
&\qquad\qquad= \begin{cases} 
\left\{t\left(\frac{\lambda^2}{2} + \mu \lambda \right) + s \log \frac{\mu + \lambda}{\mu}\right\} \vee\left\{t\left(-\frac{\lambda^2}{2} + \mu \lambda \right) + s \log \frac{\mu}{\mu-\lambda}\right\} & \lambda < \mu \\
\infty & \lambda \geq \mu.
\end{cases}
\end{align*}
\end{theorem}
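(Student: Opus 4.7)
The plan is to bootstrap from Theorem \ref{thm:BDPRTRF} via the decomposition \eqref{eq:BDPstatcoupling}. Write $L_n^\mu(t) = U_n(t) \vee V_n(t)$ where $U_n(t)$ is the max over $s_0 \in [0,t]$ and $V_n(t)$ is the max over $j \in \{1,\ldots,n\}$ appearing on the right-hand side of \eqref{eq:BDPstatcoupling}. For $\lambda \geq 0$, the sandwich $e^{\lambda U_n}\vee e^{\lambda V_n} \leq e^{\lambda L_n^\mu} \leq e^{\lambda U_n}+e^{\lambda V_n}$ reduces the Lyapunov exponent of $L_{\lfloor ns\rfloor}^\mu(nt)$ to $\max(\Phi_U,\Phi_V)$, where $\Phi_U,\Phi_V$ are the individual Lyapunov exponents of $U_{\lfloor ns\rfloor}(nt)$ and $V_{\lfloor ns\rfloor}(nt)$. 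The case $\lambda \geq \mu$ is immediate: taking $j=1$ in $V_n$ gives $L_n^\mu(t) \geq q_1^\mu(0) + L_n(t)$; the summands are independent because they depend on disjoint portions of the driving Brownians, and $q_1^\mu(0) \sim \operatorname{Exp}(\mu)$ by Theorem \ref{thm:BDPBurke} has infinite Laplace transform at $\lambda \geq \mu$.

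For $\lambda \in [0,\mu)$, I first combine Brownian scaling $L_n(t) \stackrel{d}{=} \sqrt{t/n}\, L_n(n)$ with Varadhan's lemma applied to Theorem \ref{thm:BDPRTRF} (with the integrability supplied by Corollary \ref{cor:BDPfinitenbound}) to get
\begin{equation*}
\Lambda_{LPP}(\lambda,t,s) := \lim_n \tfrac{1}{n} \log \bbE\bigl[e^{\lambda L_{\lfloor ns \rfloor}(nt)}\bigr] = s\, I(\lambda \sqrt{t/s}), \qquad I(\gamma) = \sup_{r \geq 0}\{2\gamma(1+r) - J_{GUE}(r)\}.
\end{equation*}
A direct calculation using $J_{GUE}'(r) = 4\sqrt{r(r+2)}$ at the critical $r$ yields the clean identity $I'(\gamma) = \sqrt{4+\gamma^2}$, which drives the rest of the argument. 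For $\Phi_U$, the independence of $B_0(t) - B_0(s_0)$ from $L_{1,n}(s_0,t) \stackrel{d}{=} L_n(t-s_0)$ (noted around \eqref{eq:BDPcoupling}) combined with a lower bound by the single choice $s_0 = nu$ and a matching upper bound obtained by partitioning $[0,nt]$ into polynomially many cells (using monotonicity of $L_{1,n}(\cdot,t)$ in its first argument and standard Brownian oscillation estimates to control the max inside each cell at subexponential cost) produces
\begin{equation*}
\Phi_U = \sup_{u \in [0,t]} \bigl\{\lambda\mu u + \tfrac{\lambda^2}{2} u + \Lambda_{LPP}(\lambda, t-u, s)\bigr\}.
\end{equation*}
The parallel argument for $V_n$, now using Theorem \ref{thm:BDPBurke} to give $\bbE[e^{\lambda \sum_{k=1}^j q_k^\mu(0)}] = (\mu/(\mu-\lambda))^j$ together with the independence of $\sum_{k=1}^j q_k^\mu(0)$ (on $(-\infty,0]$) from $L_{j,n}(0,t) \stackrel{d}{=} L_{n-j+1}(t)$ (on $[0,t]$), yields
\begin{equation*}
\Phi_V = \sup_{\alpha \in [0,s]} \bigl\{\alpha \log \tfrac{\mu}{\mu-\lambda} + \Lambda_{LPP}(\lambda, t, s-\alpha)\bigr\}.
\end{equation*}

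It remains to solve these two concave programs and reassemble. The first-order condition for $\Phi_U$, simplified by $I'(\gamma) = \sqrt{4+\gamma^2}$, reduces to a quadratic with positive root $\gamma^* = \lambda/\sqrt{\mu(\mu+\lambda)}$, i.e.\ $u^* = t - s/(\mu(\mu+\lambda))$; substituting back produces a telescoping cancellation giving exactly $t(\lambda^2/2 + \mu\lambda) + s\log((\mu+\lambda)/\mu)$. The analogous calculation for $\Phi_V$ yields $\alpha^* = s - t\mu(\mu-\lambda)$ and value $t(-\lambda^2/2 + \mu\lambda) + s\log(\mu/(\mu-\lambda))$. The main obstacle is the boundary-regime bookkeeping: when the interior critical for $\Phi_U$ (or $\Phi_V$) falls outside the feasible set, concavity of $\Lambda_{LPP}$ in its time or index argument (inherited from concavity of $I$) forces the corresponding sup to be attained at the boundary and equal $\Lambda_{LPP}(\lambda,t,s)$, which is strictly smaller than the formal interior value. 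One then has to verify that precisely in this regime the other of $\Phi_U,\Phi_V$ attains its interior value and coincides with the dominant of the two stated expressions, so that $\max(\Phi_U,\Phi_V)$ agrees with the theorem's maximum in every parameter range; the relevant dichotomy is organized around the characteristic direction $s = t\mu^2$.
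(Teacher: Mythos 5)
Your proposal follows essentially the same route as the paper's proof (Lemma \ref{lem:BDPstatLyapunov1} together with the final corollary of Section 3): bootstrap from the decomposition \eqref{eq:BDPstatcoupling}, obtain the variational expressions $\Phi_U$ and $\Phi_V$ by fixing a macroscopic exit point for the lower bound and partitioning for the upper bound, then optimize and take the maximum. Three execution details differ, and are worth noting. First, you recover $\Lambda_{s,t}(\lambda)$ from Theorem \ref{thm:BDPRTRF} via an upper-tail Varadhan argument, whereas the paper simply reuses Corollary \ref{cor:BDPLyapunov}, which is already available; your route is a legitimate (and arguably more in the spirit of ``bootstrapping from Theorem \ref{thm:BDPRTRF}'') but longer path to the same $\Lambda_{s,t}$, and your identity $I'(\gamma)=\sqrt{4+\gamma^2}$ is exactly the integrand in the paper's $\Lambda_{s,t}(\lambda)=\int_0^\lambda\sqrt{4st+(tx)^2}\,dx$ after the scaling $s I(\lambda\sqrt{t/s})$. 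Second, your treatment of $\lambda\ge\mu$ via $L_n^\mu(t)\ge q_1^\mu(0)+L_n(t)$ and the blow-up of the $\operatorname{Exp}(\mu)$ Laplace transform is cleaner and more direct than the paper's monotonicity-and-$\lambda\uparrow\mu$ limit argument, and it is correct (the independence is because $q_1^\mu(0)$ is $\sigma(B_0,B_1$ on $(-\infty,0])$-measurable while $L_n(t)$ is $\sigma(B_1,\dots,B_n$ on $[0,t])$-measurable). Third, the paper solves the two constrained suprema by inserting the dual form of $\Lambda$ and applying a minimax swap, after which the casework is handled uniformly by convexity of $z\mapsto t(\lambda^2/2+z\lambda)+s\log\frac{z+\lambda}{z}$; you instead differentiate directly and track interior/boundary regimes by hand, which is more elementary but leaves more to check.

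Two small inaccuracies to flag. The claim that $L_{1,n}(\cdot,t)$ is monotone in its first argument is not quite right: for $a<a'$ one only has $L_{1,n}(a',t)\le L_{1,n}(a,t)-B_1(a,a')$, so there is an additional $B_1$ oscillation that must be absorbed by the same reflection-principle estimate you already invoke for $B_0$; this is a routine fix but should be stated. And the boundary dichotomy is not literally $s=t\mu^2$; the interior critical points of $\Phi_U$ and $\Phi_V$ are feasible precisely when $s<t\mu(\mu+\lambda)$ and $s>t\mu(\mu-\lambda)$ respectively (equivalently, when the unconstrained minimizer $z^*$ of $z\mapsto t(\lambda^2/2+z\lambda)+s\log\frac{z+\lambda}{z}$ lies in $(\mu-\lambda,\infty)$ or $(0,\mu)$), and it is convexity of that dual function that forces the boundary value to equal $\Lambda_{s,t}(\lambda)$ and to be dominated by the other branch's interior value. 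Your description correctly identifies the phenomenon but the quantitative statement needs adjustment, and since you explicitly defer this bookkeeping, the proposal as written is a sound sketch with an acknowledged and fillable gap rather than a complete argument.
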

Note that although the previous expression has four parameters in it, only two (one of $s$ or $t$ and one of $\lambda$ or $\mu$) are really essential to the statement.  The expression above can be simplified by appealing to the equality in law $\lambda L_n^{\mu}(t) \stackrel{d}{=} L_n^{\mu/\lambda}(\lambda^2 t)$, which is valid for $\mu,\lambda>0$, $t \in \bbR$, and $n \in \bbN$. This follows immediately from the definition of $L_n(t)$ in \eqref{eq:statdef} and Brownian scaling. The form of the theorem given here with the extra parameters included is easier to work with in the proof.

One of the main goals of this paper is to keep the proofs short and non-technical. For this reason, we stop at computing the Lyapunov exponents for the stationary model and do not prove the corresponding upper tail rate function limit. The essential technical difficulty is that we no longer have access to subadditivity in the stationary model, which would have given us {\em a priori} existence, finiteness, and convexity of the upper tail rate function. If we knew those properties, then Theorem \ref{thm:statLyapunov} would give the rate function by taking a Legendre transform. Without this, the rate function can be computed following steps similar to the proof of Theorem \ref{thm:statLyapunov}, but working instead with $\limsup$ and $\liminf$ upper tail rate functions directly. The analysis and calculus needed to derive and solve the resulting variational problem become more involved than in the proof of Theorem \ref{thm:statLyapunov}. See \cite{GS13,Jan15} and in particular the proof of \cite[Theorem 2.14]{GS13} for a similar argument.

\section{Proof of Theorem \ref{thm:BDPRTRF} and Corollary \ref{cor:BDPfinitenbound}}
To start, we show existence and regularity of the Lypaunov exponents and upper tail rate functions in the point-to-point model. These are essentially immediate consequences of the superadditivity of the passage times.
\begin{proposition}\label{prop:BDPLJexist}
For any $s,t,\lambda>0$ and $r \in \bbR$, the limits
\begin{align*}
\Lambda_{s,t}(\lambda) := \lim_{n \to \infty} \frac{1}{n} \log \bbE\left[e^{\lambda L_{\lf ns \rf}(nt)}\right]  , \qquad J_{s,t}(r) := \lim_{n\to\infty} -\frac{1}{n} \log \bbP\left( L_{\lf ns \rf}(nt) \geq nr \right)
\end{align*}
exist and are finite. Moreover $J_{s,t}(r) \geq 0$. For each $\lambda > 0$, the map $(s,t) \mapsto \Lambda_{s,t}(\lambda)$ for $(s,t) \in (0,\infty)^2$ is positively homogeneous of degree one, superadditive, concave, and continuous. For $(s,t,r) \in (0,\infty)^2 \times \bbR$, the map $(s,t,r) \mapsto J_{s,t}(r)$ is positively homogeneous of degree one, subadditive, convex, and continuous. For each $(s,t)$, $J_{s,t}(r) = 0$ for $r \leq 2 \sqrt{st}$ and $r \mapsto J_{s,t}(r)$ is non-decreasing.
\end{proposition}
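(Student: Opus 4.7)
The proof combines a pathwise superadditivity with the Burke decomposition to produce convergent subadditive/superadditive sequences. The starting point is the splitting inequality
\begin{align*}
L_{m+n}(t+u) &\geq L_m(t) + \widetilde{L}_n(u),
\end{align*}
obtained by fixing $s_m = t$ in the definition of $L_{m+n}(t+u)$ and using that $\widetilde{L}_n(u)$, built from $B_{m+1}, \ldots, B_{m+n}$ on $[t, t+u]$, is independent of $L_m(t)$. Taking $\bbE[e^{\lambda \cdot}]$ yields superadditivity of $A(n,t) := \log\bbE[e^{\lambda L_n(t)}]$ in $(n,t)$; taking tail probabilities yields subadditivity of $(n,t,r) \mapsto -\log \bbP(L_n(t) \geq r)$. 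For rational $s = p/q$ and $t = p'/q'$, Fekete's lemma applied to the sequence $k \mapsto A(kpq',kp'q)$ (respectively $k \mapsto -\log\bbP(L_{kpq'}(kp'q) \geq krqq')$) produces the desired limit along $n = kqq'$, as a $\sup_k$ in $[0,\infty]$ (respectively an $\inf_k$ in $[0,\infty)$).

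To force $\Lambda_{s,t}(\lambda) < \infty$, I would use the Burke identity in (2.6)--(2.7): inserting $s_0 = 0$ gives $L_n(t) \leq L_n^\mu(t) = \sum_{k=1}^n q_k^\mu(t) + \mu t - B_0(t)$. Cauchy--Schwarz plus Theorem \ref{thm:BDPBurke} (which gives the $q_k^\mu(t)$ as i.i.d.\ $\mathrm{Exp}(\mu^{-1})$, hence $\bbE[e^{2\lambda q_k^\mu(t)}] = \mu/(\mu - 2\lambda)$ for $2\lambda < \mu$) plus the Gaussian moment generating function produce
\begin{align*}
\bbE[e^{\lambda L_n(t)}] &\leq e^{\lambda\mu t + \lambda^2 t}\left(\frac{\mu}{\mu - 2\lambda}\right)^{n/2}
\end{align*}
for any $\mu > 2\lambda > 0$, whose logarithm is linear in both $n$ and $t$. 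For $J$ no such estimate is needed since $-\log\bbP \geq 0$. To extend existence from rational to general $(s,t) \in (0,\infty)^2$, I would sandwich between rational approximations, using monotonicity of $A(n,t)$ in $n$ (from the pathwise inequality $L_{n+1}(t) \geq L_n(t)$, obtained by taking an arbitrarily short final segment) and in $t$ (via Brownian scaling $L_n(t) \stackrel{d}{=} \sqrt{t}\,L_n(1)$ and convexity of the cumulant generating function of $L_n(1)$, whose derivative $\bbE[L_n(1)]$ at $0$ is positive); the concavity of the rational limit, established next, forces the upper and lower sandwich limits to coincide.

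The remaining properties follow routinely: positive homogeneity of degree one in $(s,t)$ for $\Lambda$ (and in $(s,t,r)$ for $J$) comes from taking the limit along $\alpha n \in \bbN$ for rational $\alpha$; superadditivity of $\Lambda$ and subadditivity of $J$ in the scaling parameters are inherited from the prelimit inequalities up to a routine floor correction; concavity of $\Lambda$ and convexity of $J$ are equivalent to homogeneity plus superadditivity/subadditivity on the positive cone; and finite concave/convex functions on an open convex subset of Euclidean space are continuous there. Non-negativity of $J$ is immediate from $\bbP \leq 1$, monotonicity in $r$ from the inclusion $\{L \geq nr'\} \subseteq \{L \geq nr\}$ for $r \leq r'$, and $J_{s,t}(r) = 0$ for $r \leq 2\sqrt{st}$ from the law of large numbers $n^{-1} L_{\lfloor ns \rfloor}(nt) \to 2\sqrt{st}$ (Brownian scaling combined with the limit $L_n(1)/\sqrt{n} \to 2$ from \cite{HMO02}). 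The principal obstacle is the $O(n)$ upper bound on the exponential moment: elementary bounds such as $L_n(t) \leq \sum_j \max_{[0,t]}|B_j|$ only yield an $O(n^2)$ estimate, and the Burke cancellation in (2.7) is exactly what brings this down to the linear scale needed for Fekete to produce a finite limit.
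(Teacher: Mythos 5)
Your proof mirrors the paper's approach closely: the same pathwise splitting inequality, the resulting superadditivity of $\log\bbE[e^{\lambda L}]$ and subadditivity of $-\log\bbP(L\geq r)$, a Fekete-type existence argument (the paper cites Kuczma's theorem on superadditive functions on the cone directly, while you pass through rational directions and a monotone sandwich; both are valid), and the same deduction of homogeneity, concavity/convexity, continuity, and the residual properties of $J$ from the law of large numbers. The one point where you genuinely diverge, and actually improve on the paper, is finiteness of $\Lambda_{s,t}(\lambda)$. The paper asserts finiteness from the envelope bound $L_n(t)\leq\sum_{i}2\max_{0\leq r\leq t}|B_i(r)|$, but applied at the relevant scale $(\lf ns\rf,nt)$ this gives $\log\bbE\bigl[e^{\lambda L_{\lf ns\rf}(nt)}\bigr]\leq\lf ns\rf\log\bbE\bigl[e^{2\lambda\sqrt{nt}\max_{[0,1]}|B|}\bigr]$, and since the cumulant generating function of $\max_{[0,1]}|B|$ grows quadratically this is $O(n^2)$ rather than $O(n)$, so it does not yield a finite limit after dividing by $n$. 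Your estimate via $L_n(t)\leq L_n^\mu(t)=\sum_{k=1}^n q_k^\mu(t)+\mu t-B_0(t)$, combined with Theorem~\ref{thm:BDPBurke} and Cauchy--Schwarz, produces the linear bound $\log\bbE[e^{\lambda L_n(t)}]\leq\lambda\mu t+\lambda^2 t+\tfrac{n}{2}\log\tfrac{\mu}{\mu-2\lambda}$ for any $\mu>2\lambda>0$, which is exactly what the superadditive limit requires. Your closing remark that the Burke cancellation is what reduces the exponent to the linear scale is well taken and identifies a point that the paper treats too tersely.
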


\begin{proof}
Note that the pre-limit expression in the definition of $J_{s,t}(r)$ is non-negative. For all of the conclusions except finiteness of $\Lambda_{s,t}(\lambda)$ and the last two properties of $J_{s,t}(r)$, it then suffices to show that the maps
\begin{align*}
(s,t) \mapsto \log \E\left[e^{\lambda L_{\lf s \rf}(t)}\right], \qquad (s,t,r) \mapsto - \log \bbP\left(L_{\lf s \rf}(t) \geq r\right)
\end{align*} 
are superadditive on $[1,\infty) \times (0,\infty)$ and subadditive on $[1,\infty)\times(0,\infty)\times \bbR$ respectively. See \cite[Theorem 16.2.9]{Kuczma} and the comment following the proof. Note that a subadditive function which is positively homogeneous of degree one is convex. Take $s_1,s_2 \geq 1$, $t_1,t_2 > 0$ and $r_1, r_2 \in \bbR$.  We have the inequality
\begin{align*}
L_{\lf (s_1 + s_2) \rf}(t_1 + t_2) &\geq L_{\lf s_1 \rf}(t_1) + L_{\lf s_1 \rf, \lf (s_1 + s_2)\rf}(t_1, (t_1 + t_2))
\end{align*}
where the last two terms are independent. Using translation invariance, independence, and monotonicity of $L_n(t)$ in $n$, we have
\begin{align*}
\E\left[e^{\lambda L_{\lf s_1 + s_2 \rf}(t_1 + t_2)}\right] &\geq \bbE\left[e^{\lambda L_{\lf s_1 \rf}(t_1)}\right]  \bbE\left[e^{\lambda L_{\lf s_2 \rf}(t_2)}\right], \\
\bbP\left(L_{\lf s_1 + s_2 \rf}(t_1 + t_2) \geq r_1 + r_2\right)&\geq \bbP\left( L_{\lf s_1 \rf}(t_1) \geq r_1\right) \bbP\left( L_{\lf s_2 \rf}(t_2) \geq r_2\right).
\end{align*}
Finiteness of $\Lambda_{s,t}(\lambda)$ for all $\lambda > 0$ follows from $L_n(t) \leq \sum_{i=0}^n 2 \max_{0 \leq r \leq t}|B_i(r)|$. The properties of $J_{s,t}(r)$ follow from the almost sure limit $L_n(nt)/n \to 2\sqrt{t}$, continuity, and the fact that the pre-limit expression is non-decreasing in $r$.
\end{proof}

\begin{remark} \label{rem:BDPfinitenbound}
Subadditivity shows that $J_{1,1}(r) = \inf_n  -n^{-1}\log \bbP\left( L_n(n) \geq nr \right)$. As a consequence, for any $n$, we have $\bbP\left( L_{n}(n) \geq nr \right)\leq \text{exp}\left\{-n J_{1,1}(r)\right\}$.
\end{remark}

The next result shows that the decomposition in (\ref{eq:BDPcoupling}) implies that $\Lambda_{s,t}(\lambda)$ is the solution to an invertible variational problem. This type of decomposition and versions of the argument that follows are the key steps in the papers \cite{Cie-Geo-18-,Emr-Jan-17,GS13,Jan15}.

\begin{lemma}\label{lem:BDPvarprob}
For each $s,t > 0$ and $\lambda \in (0,\mu)$,
\begin{align*}
s \log \frac{\mu}{\mu-\lambda} &= \sup_{0 \leq  r < t}\left\{(t-r) \left( \frac{\lambda^2}{2} -\mu \lambda\right) + \Lambda_{s,t-r}(\lambda) \right\}\\ 
&\vee \sup_{0 \leq u < s}\left\{t\left(\frac{1}{2}\lambda^2  - \mu \lambda\right) + u \log \frac{\mu}{\mu-\lambda} + \Lambda_{s-u,t}(\lambda)\right\}.
\end{align*}
\end{lemma}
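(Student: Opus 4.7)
The plan is to apply $\bbE[e^{\lambda\,\cdot}]$ to both sides of \eqref{eq:BDPcoupling} after substituting $n \mapsto \lfloor ns \rfloor$ and $t \mapsto nt$, and pass to the $(1/n)\log$ limit. By Theorem \ref{thm:BDPBurke}, $\sum_{k=1}^{\lfloor ns \rfloor} q_k^\mu(nt)$ is a sum of $\lfloor ns\rfloor$ independent Exponential$(\mu)$ random variables, so $\bbE[\exp(\lambda\sum_{k=1}^{\lfloor ns \rfloor} q_k^\mu(nt))] = (\mu/(\mu-\lambda))^{\lfloor ns \rfloor}$ for $\lambda < \mu$, and $(1/n)\log$ of this converges to $s\log(\mu/(\mu-\lambda))$, the left-hand side of the lemma. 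Writing $A_n, B_n$ for the first and second suprema on the right of \eqref{eq:BDPcoupling}, the sandwich
\begin{equation*}
\max\bigl(\bbE[e^{\lambda A_n}], \bbE[e^{\lambda B_n}]\bigr) \leq \bbE[e^{\lambda(A_n\vee B_n)}] \leq \bbE[e^{\lambda A_n}] + \bbE[e^{\lambda B_n}]
\end{equation*}
reduces the task to identifying the $(1/n)\log$ limit of each of $\bbE[e^{\lambda A_n}]$ and $\bbE[e^{\lambda B_n}]$ with the corresponding supremum on the right of the lemma.

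\textbf{Discrete max $B_n$.} I would bound the max over $j \in \{1, \ldots, \lfloor ns \rfloor\}$ by the sum (a polynomial loss, negligible at log scale). The three-way independence of $B_0(nt)$, $\sum_{k=1}^j q_k^\mu(0)$, and $L_{j, \lfloor ns \rfloor}(0, nt)$ noted just after \eqref{eq:BDPcoupling}, combined with another application of Theorem \ref{thm:BDPBurke}, factors each summand as $e^{\lambda^2 nt/2 - \mu\lambda nt}(\mu/(\mu-\lambda))^j \bbE[e^{\lambda L_{\lfloor ns \rfloor - j + 1}(nt)}]$. Setting $j = \lfloor nu \rfloor$ and invoking the definition of $\Lambda_{s-u,t}(\lambda)$, the $(1/n)\log$ of each summand converges to $t(\lambda^2/2 - \mu\lambda) + u\log(\mu/(\mu-\lambda)) + \Lambda_{s-u,t}(\lambda)$. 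Continuity of $u \mapsto \Lambda_{s-u,t}(\lambda)$ from Proposition \ref{prop:BDPLJexist} then promotes the discrete grid maximum over $\{k/n\}$ to the claimed supremum over $u \in [0,s)$. The matching lower bound is immediate from $B_n \geq$ the $j = \lfloor nu\rfloor$ summand alone, for arbitrary fixed $u \in [0,s)$.

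\textbf{Continuous max $A_n$, and main obstacle.} The lower bound on $\bbE[e^{\lambda A_n}]$ is parallel to the $B_n$ case: for each fixed $r \in (0,t]$, evaluate the integrand of $A_n$ at $s_0 = n(t-r)$, use independence of $B_0(nt) - B_0(s_0)$ and $L_{1,\lfloor ns \rfloor}(s_0, nt)$, and note $L_{1,\lfloor ns \rfloor}(n(t-r), nt) \stackrel{d}{=} L_{\lfloor ns \rfloor}(nr)$, giving $r(\lambda^2/2 - \mu\lambda) + \Lambda_{s,r}(\lambda)$ in the limit, which matches the lemma after $r \mapsto t - r$. The upper bound is the main technical step. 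Fix $\epsilon > 0$, partition $[0,nt]$ into $N = \lceil t/\epsilon\rceil$ intervals $I_k = [n\epsilon k, n\epsilon(k+1)]$, and on each $I_k$ dominate
\begin{equation*}
\sup_{s_0 \in I_k} f(s_0) \leq f(n\epsilon k) + \mu n\epsilon + \max_{s \in I_k}|B_0(s) - B_0(n\epsilon k)| + \max_{s \in I_k}|B_1(s) - B_1(n\epsilon k)|,
\end{equation*}
where the control of the LPP contribution uses the elementary inequality $L_{1,\lfloor ns\rfloor}(s_0, nt) \leq L_{1,\lfloor ns\rfloor}(s_0^*, nt) - B_1(s_0^*, s_0)$ for $s_0 \geq s_0^*$, obtained by restricting the supremum in $L_{1,\lfloor ns\rfloor}(s_0^*, nt)$ to paths whose first jump lies to the right of $s_0$. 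Because the grid values $f(n\epsilon k)$ are not independent of the Brownian oscillations over $I_k$, I would apply H\"older's inequality with conjugate exponents $p \to 1^+$ and $q \to \infty$: the $f$-factor is handled exactly as in the $B_n$ analysis (with $\lambda$ replaced by $p\lambda$, which is harmless since $\Lambda_{s,r}(\lambda)$ is finite for all positive $\lambda$ by Proposition \ref{prop:BDPLJexist} and continuous in $\lambda$ by convexity), while the oscillation factor contributes only $O(\epsilon)$ at the $(1/n)\log$ scale by standard Gaussian maximal estimates for a max of $O(1/\epsilon)$ Brownian oscillations of scale $\sqrt{n\epsilon}$. Sending $\epsilon \to 0$ and $p \to 1^+$, and using continuity of $r \mapsto \Lambda_{s,r}(\lambda)$, recovers the desired supremum. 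The main obstacle is this continuous discretization; once handled, the three pieces assemble into the claimed identity.
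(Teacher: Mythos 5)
Your proposal is correct and arrives at the lemma by a genuinely different route from the paper's upper-bound argument, and in one respect it is actually more careful. The paper's discretization splits $B_0(nt)-B_0(nr)$ at the \emph{right} endpoint $r_i$ of each subinterval, so that $B_0(nt)-B_0(nr_i)$, $\max_{r\in[r_{i-1},r_i]}\{B_0(nr_i)-B_0(nr)\}$, and $L_{1,\lf ns\rf}(nr_{i-1},nt)$ are mutually independent and the moment generating function factors without any interpolation inequality. You instead center at the \emph{left} endpoint, which destroys that independence and forces you into H\"older with exponents $p\to 1^+$, $q\to\infty$; this works but carries the extra technical cost of a coordinated double limit. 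However, the paper's version implicitly uses the monotone bound $L_{1,\lf ns\rf}(nr,nt)\leq L_{1,\lf ns\rf}(nr_{i-1},nt)$ for $r\geq r_{i-1}$, which is \emph{false} in general: with the paper's convention $B_k(s,t)=B_k(t)-B_k(s)$, the map $u\mapsto L_{1,n}(u,t)$ is not monotone (for $n=1$ it is just $B_1(t)-B_1(u)$). The correct inequality is precisely the one you write down, $L_{1,n}(s_0,t)\leq L_{1,n}(s_0^*,t)-B_1(s_0^*,s_0)$, whose $B_1$-correction is not independent of $L_{1,n}(s_0^*,t)$; so the H\"older step you introduce is not merely an alternative, it supplies a patch for a gap in the paper's displayed chain of inequalities. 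Two smaller remarks. First, the oscillation factor after H\"older contributes $O(q\lambda^2\e)$ at the $\tfrac{1}{n}\log$ scale, not $O(\e)$; since $q\to\infty$ as $p\to 1^+$ you must send $\e\to0$ and $q\to\infty$ with $q\e\to0$, which you gesture at but should state explicitly. Second, your handling of the discrete max by summing over all $j\in\{1,\dots,\lf ns\rf\}$ rather than over a coarse grid of $M$ points needs the uniform-in-$j$ bound $\frac1n\log\bbE[e^{\lambda L_m(nt)}]\leq \Lambda_{m/n,t}(\lambda)$; this does follow from the superadditivity in the proof of Proposition~\ref{prop:BDPLJexist} together with Fekete's lemma (the prelimit is bounded by its limit for superadditive sequences), but it is worth saying, and reverting to the paper's finite grid avoids the issue entirely.
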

\begin{proof}
We begin with the coupling (\ref{eq:BDPcoupling}). It follows that for any $r \in [0,t)$, $u \in [0,s)$ and $n$ large enough,
\begin{align*}
\bbE\left[e^{\lambda \sum_{k=1}^{\lf ns \rf} q_k^{\mu}(nt)}\right] &\geq \bbE\left[ e^{\lambda\left( \mu (nr - nt) + B_0(nt) - B_0(nr) + L_{1,n}(nr,nt)\right)}\right] \\
 &\vee \bbE\left[e^{\lambda\left(B_0(t) - \mu t + \sum_{k=1}^{\lf nu \rf} q_k^{\mu}(0) + L_{\lf nu \rf,n}(0,t)\right)}\right].
 \end{align*}
The random variables $B_0(nt) - B_0(nr)$ and $L_{1,n}(nr,nt)$ are independent because $B_0(\cdot)$ is independent of $\{B_j(\cdot)\}_{j=1}^\infty$. The random variables  $\sum_{k=1}^{\lf nu \rf} q_k^{\mu}(0)$, $L_{\lf nu \rf,n}(0,t)$, and $B_0(t)$ are independent because $\sum_{k=1}^{\lf nu \rf} q_k^{\mu}(0)$ is measurable with respect to $\sigma(B_j(t) : t \leq 0, j \in \bbZ_+)$, $L_{\lf nu \rf,n}(0,t)$ is measurable with respect to $\sigma(B_j(t) : t \geq 0, j \in \bbN)$, and $B_0(t)$ is measurable with respect to $\sigma(B_0(t) : t >0)$. Taking logs, dividing by $n$ and sending $n \to \infty$, and optimizing over $u$ and $r$, we immediately obtain $\geq$ in the statement of the theorem.

Let $\{r_i\}_{i=1}^M$ and $\{u_i\}_{i=1}^M$ be partitions of $[0,t]$ and $[0,s]$ into equally sized subintervals of length $t/M$ and $s/M$ respectively. Notice that
\begin{align*}
&\max_{0 \leq r \leq t}\left\{n\mu (r - t) + B_0(nt) - B_0(nr) + L_{1,\lf ns\rf}(nr,nt)\right\} \\
= &\max_{2 \leq i \leq M} \max_{r \in [r_{i-1},r_i]} \left\{n\mu (r - t) + B_0(nt) - B_0(ns) + L_{1,\lf ns \rf}(nr,nt)\right\} \\
\leq &\max_{2 \leq i \leq M} \left\{n \mu (r_i - t) +  B_0(nt) - B_0(n r_{i}) +  \max_{r \in [r_{i-1},r_i]} \left\{B_0(n r_i) - B_0(nr)\right\}  + L_{1, \lf ns \rf}(r_{i-1},t)\right\}.
\end{align*}
Similarly, we have
\begin{align*}
&\max_{1 \leq j \leq \lf ns \rf}\left\{B_0(n t) - n \mu t + \sum_{k=1}^j q_k^{\mu}(0) + L_{j,\lf ns \rf}(0,nt) \right\} \\
\leq &\max_{2 \leq i \leq M} \left\{B_0(nt) - n \mu t + \sum_{k=1}^{\lf n u_{i} \rf} q_k^{\mu}(0) + L_{\lf n u_{i-1}\rf,\lf n s \rf}(0,nt)\right\}.
\end{align*}
It follows from these inequalities and independence that 
\begin{align*}
&\bbE\left[e^{\lambda \sum_{j=1}^{\lf ns \rf}q_k^\mu(nt)}\right] \leq\\
&\sum_{i=2}^M e^{n \mu (r_i - t)}\bbE\left[e^{\lambda (B_0(nt) - B_0(n r_{i}))}\right]\bbE\left[e^{\lambda  \max_{r \in [r_{i-1},r_i]} \left\{B_0(n r_i) - B_0(nr)\right\}}\right]\bbE\left[e^{\lambda L_{1, \lf ns \rf}(r_{i-1},t)}\right] \\
&\qquad+ \bbE\left[e^{\lambda(B_0(nt) - n \mu t )}\right]\bbE\left[e^{\lambda  \sum_{k=1}^{\lf n u_{i} \rf} q_k^{\mu}(0)  }\right]\bbE\left[e^{\lambda L_{\lf n u_{i-1}\rf,\lf n s \rf}(0,nt)}\right]
\end{align*}
By the reflection principle and the assumption that $r_i - r_{i-1} = \frac{t}{M}$, we have
\begin{align*}
&\bbE\left[e^{\lambda  \max_{r \in [r_{i-1},r_i]} B_0(n r_i) - B_0(nr)} \right] = \bbE\left[e^{\lambda \sqrt{n} |B_0(\frac{t}{M})|}\right] \\
&\qquad \leq \bbE\left[e^{\lambda \sqrt{n} B_0(\frac{t}{M})}\right] + \bbE\left[e^{-\lambda \sqrt{n} B_0(\frac{t}{M})}\right].
\end{align*} 
Take logs, divide by $n$ and send $n \to \infty$ to obtain
\begin{align*}
s \log \frac{\mu}{\mu-\lambda} &\leq \max_{2 \leq i \leq M} \left\{\mu \lambda(r_i-t) + (t-r_i) \frac{\lambda^2}{2} + \frac{\lambda^2 t}{2M} + \Lambda_{s,t- r_{i-1}}(\lambda) \right\}\\ 
&\vee \max_{2 \leq i \leq M}\left\{\frac{1}{2}\lambda^2 t - \mu \lambda t + u_i \log \frac{\mu}{\mu-\lambda} + \Lambda_{s-u_{i-1},t}(\lambda)\right\} \\
&\leq \left(\sup_{0 \leq  r < t}\left\{\mu \lambda(r-t) + (t-r) \frac{\lambda^2}{2} + \Lambda_{s,t-r}(\lambda) \right\} + \frac{\lambda^2 t}{2M} + \frac{\mu \lambda t}{ M}  \right) \\
&\vee\left( \sup_{0 \leq u < s}\left\{\frac{1}{2}\lambda^2 t - \mu \lambda t + u \log \frac{\mu}{\mu-\lambda} + \Lambda_{s-u,t}(\lambda)\right\} + \frac{s}{M} \log \frac{\mu}{\mu-\lambda}\right).
\end{align*}
Sending $M \to \infty$ completes the proof.
\end{proof}
Variational problems of the type in Lemma \ref{lem:BDPvarprob} appear for the Lyapunov exponents and free energies (resp. time constants) of directed polymers (resp. percolation models) which have associated stationary models that satisfy appropriate analogues of the Burke property. Up to a change of variables, a deformation of the region on which the maximization takes place, and homogeneity of $\Lambda_{s,t}(\lambda)$ in $(s,t)$, this variational expression gives a Legendre-Fenchel duality between directions  $(s,t)$ and values of $\mu > \lambda$. See for example \cite[Section 5]{Emr-16} for this point of view. Alternatively, this variational problem can be solved directly with some easy calculus. This is done in some generality in \cite[Proposition 3.10]{Jan15}, so we appeal to that result here.
\begin{corollary} \label{cor:BDPLyapunov}
For any $s,t,\lambda > 0$, 
\begin{align*}
\Lambda_{s,t}(\lambda) &= \min_{\mu > \lambda }\left\{t \left(\lambda \mu - \frac{1}{2}\lambda^2\right) + s \log \frac{\mu}{\mu - \lambda }\right\} = \min_{z > 0}\left\{t \left(\frac{1}{2}\lambda^2 + z \lambda \right) + s \log \frac{z + \lambda}{z}\right\} \\
&= \frac{1}{2}\lambda\sqrt{4st + (t\lambda)^2} + s \log \left(\frac{2s + t \lambda^2 + \lambda \sqrt{4st + (t\lambda)^2}}{2s}\right) = \int_{0}^\lambda \sqrt{4st + (tx)^2} dx.
\end{align*}
\end{corollary}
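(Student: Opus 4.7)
The plan is to invert the variational identity of Lemma \ref{lem:BDPvarprob} to extract $\Lambda_{s,t}(\lambda)$, yielding the first displayed equality; the remaining three equalities then follow by elementary calculus. Throughout, write $K(s, t, \mu) := t(\mu\lambda - \tfrac{1}{2}\lambda^2) + s\log\tfrac{\mu}{\mu - \lambda}$ for the quantity to be minimized. The upper bound $\Lambda_{s, t}(\lambda) \leq K(s, t, \mu)$ for every $\mu > \lambda$ is immediate from Lemma \ref{lem:BDPvarprob} by specializing the first supremum to $r = 0$ and rearranging, giving $\Lambda_{s, t}(\lambda) \leq \inf_{\mu > \lambda} K(s, t, \mu)$.

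For the matching lower bound, I first record that $\mu \mapsto K(s, t, \mu)$ is strictly convex on $(\lambda, \infty)$ with unique interior minimizer $\mu^*(s, t) = \tfrac{1}{2}(\lambda + \sqrt{\lambda^2 + 4s/t})$, characterized by the first-order condition $\mu^*(\mu^* - \lambda) = s/t$. Evaluate Lemma \ref{lem:BDPvarprob} at $\mu = \mu^*(s, t)$: by continuity of $\Lambda$ (Proposition \ref{prop:BDPLJexist}) and its vanishing as $t' \to 0^+$, both suprema are attained on their closed domains; since their maximum equals the strictly positive value $s\log\tfrac{\mu^*}{\mu^* - \lambda}$, the attained maximizer cannot lie at the $0^+$ boundary, so one supremum is realized at some nontrivial $t^* \in (0, t]$ (first sup) or $s^* \in (0, s]$ (second sup). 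In the first case, rearranging yields $\Lambda_{s, t^*}(\lambda) = K(s, t^*, \mu^*)$; combining with the upper bound $\Lambda_{s, t^*}(\lambda) \leq K(s, t^*, \mu^*(s, t^*))$ and strict convexity of $K(s, t^*, \cdot)$ forces $\mu^* = \mu^*(s, t^*)$, so $s/t^* = \mu^*(\mu^* - \lambda) = s/t$, and hence $t^* = t$ and $\Lambda_{s, t}(\lambda) = K(s, t, \mu^*)$. The second case is symmetric, producing $s^* = s$ and the same conclusion. Thus $\Lambda_{s, t}(\lambda) = \min_{\mu > \lambda} K(s, t, \mu)$.

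The remaining three equalities are calculus. The substitution $z = \mu - \lambda$ converts the first expression into the second. The third follows by plugging $\mu^*(s, t)$ into $K$ and simplifying via $\mu^*(\mu^* - \lambda) = s/t$ to rationalize the logarithm. For the integral form, the envelope theorem gives $\partial_\lambda \Lambda_{s, t}(\lambda) = \partial_\lambda K(s, t, \mu; \lambda)\big|_{\mu = \mu^*(s, t)}$, which after a short computation using the first-order condition collapses to $\sqrt{4st + (t\lambda)^2}$; integrating from $0$ to $\lambda$ with $\Lambda_{s, t}(0) = 0$ completes the identification. The main obstacle is the case analysis in the lower bound: one must rule out boundary degeneracies of the two suprema and propagate strict convexity of $K$ in $\mu$ to pin the maximizer at $t^* = t$ (respectively $s^* = s$), which is what ultimately identifies $\Lambda_{s, t}$ with the minimum of $K$.
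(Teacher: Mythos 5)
Your proposal reaches the same conclusion but by a genuinely different route for the key first equality. The paper's proof simply cites \cite[Proposition 3.10]{Jan15} as a black box to invert the variational identity of Lemma~\ref{lem:BDPvarprob} (with $I=\{\mu>\lambda\}$, $h(\mu)=-\lambda^2/2+\lambda\mu$, $g(\mu)=\log\frac{\mu}{\mu-\lambda}$), whereas you re-derive the inversion from scratch: specialize $r=0$ for the upper bound $\Lambda_{s,t}(\lambda)\le K(s,t,\mu)$, then at the critical value $\mu^*=\mu^*(s,t)$ use attainment of the supremum together with the upper bound at the attained point and strict convexity of $\mu\mapsto K(s,t,\mu)$ to force $\mu^*=\mu^*(s,t^*)$ and hence $t^*=t$ (respectively $s^*=s$). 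This is self-contained and elementary, which is a real virtue here, at the cost of a somewhat longer argument. The subsequent calculus (substitution $z=\mu-\lambda$; evaluation at $\mu^*$ using $\mu^*(\mu^*-\lambda)=s/t$ to rationalize the logarithm; envelope theorem giving $\partial_\lambda\Lambda_{s,t}(\lambda)=t(\mu^*-\lambda)+\tfrac{s}{\mu^*-\lambda}=\sqrt{4st+(t\lambda)^2}$) all checks out.

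One imprecision worth fixing in the boundary-exclusion step. You assert that the objective vanishes at the degenerate boundary and hence cannot achieve the strictly positive target $s\log\frac{\mu^*}{\mu^*-\lambda}$. This is correct for the first supremum, since as $r\to t^-$ one has $(t-r)(\tfrac{\lambda^2}{2}-\mu\lambda)+\Lambda_{s,t-r}(\lambda)\to 0$. However, for the second supremum, as $u\to s^-$ the $\Lambda$ term does \emph{not} vanish: $\Lambda_{s-u,t}(\lambda)\to\Lambda_{0,t}(\lambda)=\tfrac{1}{2}\lambda^2 t$, and the boundary value of the objective is $t\lambda(\lambda-\mu^*)+s\log\frac{\mu^*}{\mu^*-\lambda}$, whose sign is not determined. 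The correct reason this boundary value is strictly below the target is simply that $t\lambda(\lambda-\mu^*)<0$ because $\mu^*>\lambda$, not that anything vanishes. The needed conclusion, that neither degenerate boundary achieves the maximum, is still true, so the argument goes through once this is patched; but as written the stated justification only covers the first supremum.
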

\begin{proof}
The first equality follows from Lemma \ref{lem:BDPvarprob} and \cite[Proposition 3.10]{Jan15} with  $I = \{\mu > \lambda\}$, $h(\mu) = -\frac{\lambda^2}{2} + \lambda \mu$, and $g(\mu) = \log \frac{\mu}{\mu - \lambda}$. The second equality is the change of variables $z = \mu - \lambda$. The third and fourth equalities follow from calculus.
\end{proof}

The next result is the analogue of Varadhan's lemma for upper tail rate functions.
\begin{lemma} \label{lem:BDPvaradhan}
For each $s,t > 0$,
\begin{align*}
\sup_{r \in \bbR}\{\lambda r - J_{s,t}(r)\} &=\begin{cases}
\infty & \lambda < 0 \\
 \Lambda_{s,t}(\lambda)  & \lambda \geq 0.
\end{cases}
\end{align*}
\end{lemma}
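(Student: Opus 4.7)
The statement is a variant of Varadhan's lemma tailored to upper-tail rate functions. I would split the argument into the trivial case $\lambda<0$ and the substantive case $\lambda\geq 0$, and for the latter prove the two matching inequalities separately.

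The case $\lambda<0$ is immediate. Proposition \ref{prop:BDPLJexist} tells us that $J_{s,t}(r)=0$ for all $r\leq 2\sqrt{st}$, so taking $r\to-\infty$ gives $\lambda r - J_{s,t}(r)=\lambda r\to+\infty$, whence the supremum equals $+\infty$ as claimed.

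For $\lambda\geq 0$, the inequality $\sup_r\{\lambda r - J_{s,t}(r)\}\leq \Lambda_{s,t}(\lambda)$ follows from the one-sided Chebyshev bound
\begin{align*}
\bbE\bigl[e^{\lambda L_{\lf ns\rf}(nt)}\bigr]\;\geq\; e^{\lambda n r}\,\bbP\bigl(L_{\lf ns\rf}(nt)\geq nr\bigr)\qquad(\lambda\geq 0,\; r\in\bbR),
\end{align*}
obtained by restricting expectation to the event $\{L_{\lf ns\rf}(nt)\geq nr\}$. Taking logs, dividing by $n$, and passing to the limit yields $\Lambda_{s,t}(\lambda)\geq \lambda r - J_{s,t}(r)$ for every $r$, so the supremum is dominated by $\Lambda_{s,t}(\lambda)$.

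The reverse inequality is the heart of the matter and requires a discretization and truncation argument in the spirit of the classical proof of Varadhan's lemma. Fix $\lambda\geq 0$, fix $\delta>0$, and pick any $\lambda'>\lambda$, noting that $\Lambda_{s,t}(\lambda')<\infty$ by Proposition \ref{prop:BDPLJexist}. I would decompose
\begin{align*}
\bbE\bigl[e^{\lambda L_{\lf ns\rf}(nt)}\bigr]\;\leq\; \bbE\bigl[e^{\lambda L_{\lf ns\rf}(nt)};\,L_{\lf ns\rf}(nt)<0\bigr]
\;+\;\sum_{k=0}^{\infty} e^{\lambda n(k+1)\delta}\,\bbP\bigl(L_{\lf ns\rf}(nt)\geq nk\delta\bigr).
\end{align*}
The first term is bounded by $1$ since $\lambda\geq 0$. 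For the sum, I would use the finite-$n$ subadditive bound $\bbP(L_{\lf ns\rf}(nt)\geq nk\delta)\leq e^{-nJ_{s,t}(k\delta)}$ (the analogue of Remark \ref{rem:BDPfinitenbound} for general $(s,t)$, which follows verbatim from Proposition \ref{prop:BDPLJexist}). For $k$ in a bounded range $\{0,1,\dots,K-1\}$, each term is at most $e^{\lambda n\delta}\,e^{n\sup_{r\geq 0}\{\lambda r - J_{s,t}(r)\}}$. For the tail $k\geq K$, I would instead apply Chebyshev at the larger parameter $\lambda'$ to get $\bbP(L_{\lf ns\rf}(nt)\geq nk\delta)\leq e^{-n\lambda' k\delta+n\Lambda_{n}(\lambda')}$ with $\Lambda_n(\lambda')\to \Lambda_{s,t}(\lambda')<\infty$. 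The resulting geometric tail sums to something bounded by $C\exp\bigl(n[\Lambda_{s,t}(\lambda')+1-(\lambda'-\lambda)K\delta]\bigr)$. Taking $n^{-1}\log$, sending $n\to\infty$, then $K\to\infty$, and finally $\delta\to 0$, the tail contribution disappears and one obtains $\Lambda_{s,t}(\lambda)\leq \sup_r\{\lambda r - J_{s,t}(r)\}$, completing the proof.

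The main technical obstacle is controlling the tail of the discretization, since the sum runs to infinity; this is where the fact that $\Lambda_{s,t}(\lambda')$ is finite for some $\lambda'>\lambda$ is essential. Once that is in hand, the rest is bookkeeping on the order of limits in $n$, $K$, and $\delta$.
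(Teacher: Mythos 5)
Your argument is correct and follows essentially the same discretize-and-truncate strategy as the paper's proof. The only cosmetic differences are that you control the tail by applying Chebyshev at a larger parameter $\lambda'>\lambda$ (yielding a summable geometric tail), whereas the paper uses Cauchy--Schwarz together with $J_{s,t}(K)\to\infty$, and that you invoke the finite-$n$ subadditive bound $\bbP(L_{\lf ns\rf}(nt)\geq nr)\leq e^{-nJ_{s,t}(r)}$ on the bounded part rather than passing to the limit termwise; both of these substitutions are valid and change nothing essential.
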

\begin{proof}
The result for $\lambda \leq 0$ follows from the observations $J_{s,t}(r) \geq 0$ for all $r$, $J_{s,t}(r) = 0$ for $r \leq 2 \sqrt{st}$ and $r \mapsto J_{s,t}(r)$ is non-decreasing. Take $\lambda,K > 0$, and let $\{m_i\}_{i=1}^M$ be a uniform partition of $[0,K]$. The exponential Markov inequality yields for each $r > 0$
\begin{align}\label{eq:BDPexpmarkov}
\lambda r - J_{s,t}(r) \leq \Lambda_{s,t}(\lambda).
\end{align}
Optimizing over $r$ gives $\leq$ in the statement of the lemma. For the reverse, notice that
\begin{align*}
\bbE\left[e^{\lambda L_{\lf ns \rf}(nt)}\right] &= \sum_{i=1}^M \bbE\left[e^{\lambda L_{\lf ns \rf}(nt)}1_{\{L_{\lf ns \rf}(nt) \in [m_{i-1}, m_i)\}}\right] + \bbE\left[e^{\lambda L_{\lf ns \rf}(nt)}1_{\{L_{\lf ns \rf}(nt) \geq K\}}\right]\\
&\leq \sum_{i=1}^M e^{\lambda m_i}\bbP\left(L_{\lf ns \rf}(nt) \geq m_{i-1}\right) + \bbE\left[e^{\lambda L_{\lf ns \rf}(nt)}1_{\{L_{\lf ns \rf}(nt) \geq K\}}\right] \\
&\leq \sum_{i=1}^M e^{\lambda m_i}\bbP\left(L_{\lf ns \rf}(nt) \geq m_{i-1}\right) + \bbE\left[e^{2\lambda L_{\lf ns \rf}(nt)}\right]^{\frac{1}{2}}\bbP\left(L_{\lf ns \rf}(nt) \geq K\right)^\frac{1}{2}.
\end{align*}
Take logs, divide by $n$ and send $n \to \infty$ to obtain
\begin{align*}
\Lambda_{s,t}(\lambda) &\leq \max_{i \leq M}\left\{\lambda m_i - J_{s,t}(m_{i-1})\right\} \vee\left\{\frac{1}{2}\Lambda_{s,t}(2\lambda) - \frac{1}{2}J_{s,t}(K)\right\} \\
&\leq \left(\sup_{r \in \bbR}\left\{\lambda r - J_{s,t}(r)\right\} + \frac{\lambda}{M}\right) \vee \left\{\frac{1}{2}\Lambda_{s,t}(2\lambda) - \frac{1}{2}J_{s,t}(K)\right\}.
\end{align*}
Equation (\ref{eq:BDPexpmarkov}) shows that $J_{s,t}(K) \to \infty$ as $K \to \infty$. Sending $M,K \to \infty$ completes the proof.
\end{proof}
\begin{corollary}
For $s,t > 0$ and $r \geq 2 \sqrt{st}$,
\begin{align*}
J_{s,t}(r) &= \sup_{\lambda,z > 0}\left\{\lambda r - t \left(\frac{1}{2}\lambda^2 + z \lambda \right) - s \log \frac{z + \lambda}{z}\right\} = \frac{r\sqrt{r^2-4st}}{2t} + s \log \left(\frac{r-\sqrt{r^2-4st}}{r+\sqrt{r^2-4st}}\right).
\end{align*}
\end{corollary}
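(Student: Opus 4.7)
The plan is to run Lemma \ref{lem:BDPvaradhan} backwards, using convexity of $J_{s,t}$ from Proposition \ref{prop:BDPLJexist} to invert the Legendre-Fenchel transform, and then substitute the variational formula for $\Lambda_{s,t}(\lambda)$ from Corollary \ref{cor:BDPLyapunov}.

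First I would note that Lemma \ref{lem:BDPvaradhan} gives $\Lambda_{s,t}(\lambda) = \sup_{r \in \mathbb{R}}\{\lambda r - J_{s,t}(r)\}$ for $\lambda \geq 0$. Proposition \ref{prop:BDPLJexist} tells us that $r \mapsto J_{s,t}(r)$ is convex, continuous (hence lower semicontinuous), non-decreasing, non-negative, and vanishes on $(-\infty, 2\sqrt{st}]$. The Fenchel-Moreau theorem then gives for $r \in \mathbb{R}$,
\begin{equation*}
J_{s,t}(r) = \sup_{\lambda \in \mathbb{R}}\{\lambda r - \Lambda_{s,t}^*(\lambda)\},
\end{equation*}
where $\Lambda_{s,t}^*$ is the convex conjugate of $J_{s,t}$. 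Since $J_{s,t} \equiv 0$ on $(-\infty, 2\sqrt{st}]$, we have $\Lambda_{s,t}^*(\lambda) = +\infty$ for $\lambda < 0$, and Lemma \ref{lem:BDPvaradhan} identifies $\Lambda_{s,t}^*(\lambda) = \Lambda_{s,t}(\lambda)$ for $\lambda \geq 0$. Moreover, for $r \geq 2\sqrt{st}$ the supremum is achieved at some $\lambda \geq 0$ (by monotonicity of $J_{s,t}$, negative $\lambda$ cannot improve on $\lambda = 0$), so
\begin{equation*}
J_{s,t}(r) = \sup_{\lambda \geq 0}\{\lambda r - \Lambda_{s,t}(\lambda)\}.
\end{equation*}

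Next I would substitute Corollary \ref{cor:BDPLyapunov}, which says $\Lambda_{s,t}(\lambda) = \inf_{z > 0}\{t(\tfrac{1}{2}\lambda^2 + z\lambda) + s\log\tfrac{z+\lambda}{z}\}$ (for $\lambda > 0$; the $\lambda = 0$ case is trivial since the bracketed expression in the variational formula then vanishes and cannot be the maximizer once $r > 2\sqrt{st}$). Interchanging $\sup_\lambda(-\inf_z) = \sup_{\lambda, z}$, this immediately gives the first equality:
\begin{equation*}
J_{s,t}(r) = \sup_{\lambda, z > 0}\left\{\lambda r - t\left(\tfrac{1}{2}\lambda^2 + z\lambda\right) - s \log \tfrac{z+\lambda}{z}\right\}.
\end{equation*}

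Finally I would evaluate the supremum by calculus. Setting partial derivatives to zero and writing $u = z+\lambda$ yields the system $z(z+\lambda) = s/t$ and $tu + s/u = r$; the second equation is a quadratic in $u$ with solution $u = (r+\sqrt{r^2-4st})/(2t)$, from which $z = (r - \sqrt{r^2-4st})/(2t)$ and $\lambda = \sqrt{r^2-4st}/t$. Concavity of the objective in $(\lambda,z)$ (or direct verification) ensures this is the global maximum on $(0,\infty)^2$, and it lies in the interior precisely when $r > 2\sqrt{st}$ (the boundary case $r = 2\sqrt{st}$ giving $J_{s,t}(r) = 0$, consistent with both expressions). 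Plugging in, the quadratic terms collapse neatly to $\tfrac{1}{2}t\lambda^2 + tz\lambda = r\sqrt{r^2-4st}/(2t)$, and the logarithmic term becomes $s \log \tfrac{u}{z} = s\log\tfrac{r+\sqrt{r^2-4st}}{r-\sqrt{r^2-4st}}$, yielding the claimed closed form.

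The only genuine obstacle is the Legendre inversion in the first step, since convex duality requires lower semicontinuity of $J_{s,t}$ on all of $\mathbb{R}$ and care about the behavior at $\lambda < 0$; these are both handled cleanly by the properties in Proposition \ref{prop:BDPLJexist}. The remaining optimization is elementary calculus.
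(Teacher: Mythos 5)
Your argument is correct and is exactly the paper's proof: the paper simply cites Lemma \ref{lem:BDPvaradhan}, Corollary \ref{cor:BDPLyapunov}, and the Fenchel--Moreau theorem for the first equality, and says the second follows by calculus; you have fleshed out precisely those steps. One small caveat: the objective $\lambda r - t(\tfrac12\lambda^2+z\lambda) - s\log\tfrac{z+\lambda}{z}$ is not jointly concave on $(0,\infty)^2$ (the Hessian determinant changes sign), so the parenthetical appeal to concavity does not stand on its own; the clean justification is the one you hinted at via ``direct verification,'' namely that $\sup_z$ of the bracket is $\lambda r - \Lambda_{s,t}(\lambda)$ by Corollary \ref{cor:BDPLyapunov}, and $\lambda \mapsto \lambda r - \Lambda_{s,t}(\lambda)$ is strictly concave (as $\Lambda_{s,t}$ is a Legendre transform), so the unique critical point is the global maximizer.
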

\begin{proof}
The first equality follows from Lemma \ref{lem:BDPvaradhan}, Corollary \ref{cor:BDPLyapunov}, and the Fenchel-Moreau theorem \cite[Theorem 12.2]{Rock}. The second equality can be obtained with calculus.
\end{proof}
\begin{remark}
Differentiating the expression in the previous result gives
\begin{align*}
J_{s,t}(r) &= 1_{\{r \geq 2\sqrt{st}\}} \int_0^{r - 2 \sqrt{st}} t^{-1}\sqrt{x(x+4\sqrt{st})}dx .
\end{align*} 
Setting $s=t=1$ and changing variables gives the expression in Theorem \ref{thm:BDPRTRF}. Combining this result with Remark \ref{rem:BDPfinitenbound} gives Corollary \ref{cor:BDPfinitenbound}.
\begin{align*}
\end{align*}
\end{remark}

\section{Proof of Theorem \ref{thm:statLyapunov}}
Having computed $\Lambda_{s,t}(\lambda)$, (\ref{eq:BDPstatcoupling}) now leads to a variational problem for the Lyapunov exponents in the stationary model for each $\mu > \lambda$. Using Corollary \ref{cor:BDPLyapunov} we may extend $\Lambda_{s,t}(\lambda)$ continuously to $\Lambda_{0,t}(\lambda) = \frac{\lambda^2t}{2}$ and $\Lambda_{s,0}(\lambda) = 0$.
\begin{lemma} \label{lem:BDPstatLyapunov1}
For each $\mu,s,t>0$ and $\lambda \in (0,\mu)$,
\begin{align*}
&\lim_{n \to \infty} \frac{1}{n}\log \bbE\left[e^{\lambda L_{\lf ns \rf}^\mu(nt)}\right]\\
&\qquad\qquad= \sup_{0 \leq r \leq t}\left\{r\left(\lambda \mu  + \frac{\lambda^2}{2}\right) + \Lambda_{s,t-r}(\lambda) \right\}\vee \sup_{0\leq u \leq s}\left\{u \log \frac{\mu}{\mu-\lambda} + \Lambda_{s-u,t}(\lambda)\right\} \\
&\qquad\qquad=  \left\{t\left(\frac{\lambda^2}{2} + \mu \lambda \right) + s \log \frac{\mu + \lambda}{\mu}\right\} \vee\left\{t\left(-\frac{\lambda^2}{2} + \mu \lambda \right) + s \log \frac{\mu}{\mu-\lambda}\right\}.
\end{align*}
\end{lemma}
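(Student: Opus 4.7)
My plan is to adapt the proof of Lemma \ref{lem:BDPvarprob} to the stationary setting. There we used the coupling (\ref{eq:BDPcoupling}) to derive a variational identity for the \emph{unknown} $\Lambda_{s,t}(\lambda)$; here I would use the simpler decomposition (\ref{eq:BDPstatcoupling}) directly, since $\Lambda_{s,t}(\lambda)$ has now been computed in Corollary \ref{cor:BDPLyapunov}. The two suprema on the right-hand side correspond exactly to the two branches of the $\vee$ in (\ref{eq:BDPstatcoupling}): paths that exit through the horizontal axis at some intermediate time, and paths that exit through the boundary of exponential increments on the vertical axis.

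For the lower bound I would fix $r \in [0,t)$ and $u \in [0,s)$ and specialize (\ref{eq:BDPstatcoupling}) at scale $n$ to $s_0 = nr$ and $j = \lf nu \rf$. Exploiting the same past/future independence invoked after (\ref{eq:BDPcoupling}), together with Theorem \ref{thm:BDPBurke} to identify $L^\mu_{\lf nu\rf}(0)$ as a sum of i.i.d.\ Exp($\mu$) variables, the exponential moments factor, yielding
\[
\bbE[e^{\lambda L^\mu_{\lf ns \rf}(nt)}] \geq e^{nr(\lambda \mu + \lambda^2/2)}\bbE[e^{\lambda L_{\lf ns \rf}(n(t-r))}] \vee \Bigl(\tfrac{\mu}{\mu-\lambda}\Bigr)^{\lf nu \rf}\bbE[e^{\lambda L_{\lf ns\rf - \lf nu \rf}(nt)}].
\]
Taking $n^{-1}\log$, sending $n \to \infty$, then optimizing over $r,u$ gives the $\geq$ half of the first equality. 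For the upper bound I would run the partition argument from the proof of Lemma \ref{lem:BDPvarprob} nearly verbatim, using monotonicity of $L_{1,n}(s_0,t)$ in $s_0$, of $L^\mu_j(0)$ in $j$, and of $L_{j,n}(0,t)$ in $j$ to replace continuous suprema by maxima over $M$ partition endpoints, with the reflection-principle estimate $\bbE[\exp(\lambda \max_{s_0 \in [r_{i-1},r_i]}(B_0(nr_{i-1})-B_0(ns_0)))] \leq 2\,e^{\lambda^2 n t /(2M)}$ contributing an error that vanishes as $M \to \infty$.

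To pass from the variational expression to the closed form I would use the two equivalent representations in Corollary \ref{cor:BDPLyapunov}. Plugging $\Lambda_{s,t-r}(\lambda) = \min_{z>0}\{(t-r)(\lambda^2/2 + z\lambda) + s\log((z+\lambda)/z)\}$ and evaluating the inner expression at $z=\mu$ yields the uniform bound
\[
r(\mu\lambda + \lambda^2/2) + \Lambda_{s,t-r}(\lambda) \leq t(\mu\lambda + \lambda^2/2) + s\log\tfrac{\mu+\lambda}{\mu},
\]
with equality at $r^\ast := t - s/(\mu(\mu+\lambda))$ whenever $r^\ast \in [0,t]$, because a direct derivative computation shows $z = \mu$ is a local (hence global) minimizer of the inner problem precisely at $r = r^\ast$. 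For the second supremum I would use the dual form $\Lambda_{s',t'}(\lambda) = \min_{\mu'>\lambda}\{t'(\lambda\mu' - \lambda^2/2) + s'\log(\mu'/(\mu'-\lambda))\}$ evaluated at $\mu' = \mu$, obtaining the matching upper bound with equality at $u^\ast := s - t\mu(\mu-\lambda)$ when $u^\ast \in [0,s]$.

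The main obstacle is the boundary case analysis. If $s > t\mu(\mu+\lambda)$ then $r^\ast < 0$ and the first supremum is attained at $r=0$ with value $\Lambda_{s,t}(\lambda)$ rather than the target expression; symmetrically if $s < t\mu(\mu-\lambda)$ the second supremum degenerates to $\Lambda_{s,t}(\lambda)$. In each regime I must check that the $\vee$ of the two target expressions still equals the true supremum. The first check reduces to showing $\Lambda_{s,t}(\lambda) \leq t(\mu\lambda - \lambda^2/2) + s\log(\mu/(\mu-\lambda))$ when $s > t\mu(\mu+\lambda)$, which is immediate by evaluating the dual minimum at $\mu' = \mu$. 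The second check reduces to a comparison of the two target expressions that boils down to the elementary bound $-\log(1-\lambda^2/\mu^2) \leq \lambda^2/(\mu^2-\lambda^2)$; this is the only calculus step that is not entirely routine.
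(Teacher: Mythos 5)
Your argument for the first equality is the paper's: for the lower bound, evaluate the decomposition (\ref{eq:BDPstatcoupling}) at fixed exit points $s_0=nr$, $j=\lf nu\rf$, factor the exponential moment using past/future independence and Theorem \ref{thm:BDPBurke}, take $\liminf$, and optimize; for the upper bound, run the partition and reflection-principle argument from the proof of Lemma \ref{lem:BDPvarprob} with a $\limsup$. For the second equality you take a different route from the paper, which substitutes $r\mapsto t-r$, $u\mapsto s-u$, inserts the $\min_z$ representation of $\Lambda$ from Corollary \ref{cor:BDPLyapunov}, swaps $\max$ and $\min$ by a minimax theorem, splits the outer minimum into $z\leq\mu$ vs.\ $z\geq\mu$ (resp.\ $z\leq\mu-\lambda$ vs.\ $z\geq\mu-\lambda$), and finishes in one stroke from strict convexity of the minimand plus existence of a minimizer. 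Your direct route — bound each supremum from above by inserting $z=\mu$ (resp.\ $\mu'=\mu$), exhibit the interior optimizers $r^*$, $u^*$, and treat infeasibility by hand — is more explicit but requires exactly the boundary case analysis the paper's convexity argument is designed to avoid.

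In that case analysis there is a genuine slip. Write $A_1 = t\bigl(\lambda\mu+\tfrac{\lambda^2}{2}\bigr)+s\log\tfrac{\mu+\lambda}{\mu}$ and $A_2 = t\bigl(\lambda\mu-\tfrac{\lambda^2}{2}\bigr)+s\log\tfrac{\mu}{\mu-\lambda}$ for the two closed-form expressions. In the regime $s>t\mu(\mu+\lambda)$ the first supremum degenerates to $\Lambda_{s,t}(\lambda)$ and the second equals $A_2$, so the left side of the identity is $\Lambda_{s,t}(\lambda)\vee A_2$ while the right side is $A_1\vee A_2$; the check you actually need is $A_1\le A_2$. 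The inequality you verify instead, $\Lambda_{s,t}(\lambda)\le A_2$, holds trivially for every $s,t>0$ — it is just feasibility of $\mu'=\mu$ in the dual minimum defining $\Lambda_{s,t}(\lambda)$ — and uses nothing about the regime $s>t\mu(\mu+\lambda)$; moreover, since $\Lambda_{s,t}(\lambda)<A_1$ strictly in that regime, it does not imply $A_1\le A_2$. The required comparison $A_1\le A_2$ is equivalent to $t\lambda^2\le s\log\tfrac{\mu^2}{\mu^2-\lambda^2}$, which does hold under $s>t\mu(\mu+\lambda)$ via the elementary bound $-\log(1-y)\ge y$ — the opposite direction of the bound you correctly invoke for the other boundary regime $s<t\mu(\mu-\lambda)$. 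With that replacement your proof of the second equality closes.
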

\begin{proof}
The proof of the first equality is essentially the same as in the proof of Lemma \ref{lem:BDPvarprob}, except that one must work with $\liminf$ and $\limsup$. For example, for any $r \in [0,t), u \in [0,s)$, and $n$ sufficiently large, we have
\begin{align*}
&\bbE\left[e^{\lambda L_{\lf ns \rf}^\mu(nt)}\right] \\
&\qquad\qquad\geq \bbE\left[e^{\lambda (\mu n r - B_0(r))}\right]\bbE\left[e^{\lambda L_{1, \lf ns \rf}(r,nt)}\right] \vee \bbE\left[e^{\lambda \sum_{i=1}^{\lf nu \rf}q_k^\mu(0)}\right]\bbE\left[e^{\lambda L_{\lf nu \rf, \lf ns \rf}(0,nt)}\right].
\end{align*}
Take logs, divide by $n$, take $\liminf$, and optimize to obtain
\begin{align*}
&\liminf_{n \to \infty} \frac{1}{n} \log \bbE\left[e^{\lambda L_{\lf ns \rf}^\mu(nt)}\right] \\
&\qquad\qquad\geq \sup_{0 \leq r \leq t}\left\{r\left(\lambda \mu + \frac{\lambda^2}{2}\right) + \Lambda_{s,t-r}(\lambda) \right\}\vee \sup_{0\leq u \leq s}\left\{u \log \frac{\mu}{\mu-\lambda} + \Lambda_{s-u,t}(\lambda)\right\}.
\end{align*}
We omit the reverse inequality which similarly follows from the type of arguments in the proof of Lemma \ref{lem:BDPvarprob}. For the second equality, it is convenient to substitute $r \mapsto t -r$ and $u \mapsto s - u$. Using the second variational expression for $\Lambda_{s,r}(\lambda)$ from Corollary \ref{cor:BDPLyapunov} and a minimax theorem (for example, see \cite[Appendix B.3]{Rassoul-AghaSeppalainen}), we obtain
\begin{align*}
 &\max_{0 \leq r \leq t}\left\{r \left(\frac{\lambda^2}{2} + \lambda \mu\right) + \Lambda_{s,t-r}(\lambda) \right\} = t\left(\frac{\lambda^2}{2} + \mu \lambda \right) +  \min_{z > 0} \max_{0 \leq r \leq t}\left\{r \lambda (z-\mu) + s \log \frac{z+\lambda}{z} \right\} \\
&\qquad\qquad\qquad\qquad\qquad=  \left\{t\left(\frac{\lambda^2}{2} + \mu \lambda \right) + s \log \frac{\mu + \lambda}{\mu}\right\} \wedge \min_{z \geq \mu}\left\{t \left(\frac{\lambda^2}{2} + z \lambda\right) + s \log \frac{z+\lambda}{z}\right\}.
\end{align*}
The second equality comes from dividing the minimum into the regions $z \leq \mu$ and $z > \mu$. A similar argument using the same variational expression and dividing into $z \leq \mu - \lambda$ and $z > \mu - \lambda$ shows that
\begin{align*}
\max_{0\leq u \leq s}\left\{u \log \frac{\mu}{\mu-\lambda} + \Lambda_{s-u,t}(\lambda)\right\} &= \left\{t\left(-\frac{\lambda^2}{2} + \mu \lambda \right) + s \log \frac{\mu}{\mu-\lambda}\right\}\\
&\wedge \min_{z \leq \mu - \lambda}\left\{t\left(\frac{\lambda^2}{2} + z \lambda\right) + s \log \frac{z+\lambda}{z} \right\}.
\end{align*}
In both of these expressions, the first term is feasible in the minimum over $z$ by taking $z = \mu$ or $z = \mu - \lambda$. To complete the proof, note that the function being minimized in the second variational expression $\Lambda_{s,r}(\lambda)$ in Corollary \ref{cor:BDPLyapunov}  is strictly convex and minimizers exist.
\end{proof}
We complete this section and the paper by addressing the exponents $\lambda \geq \mu$. This is an immediate corollary of the previous lemma.
\begin{corollary}
For each $\mu,s,t >0$ and $\lambda \geq \mu$,
\begin{align*}
\lim_{n \to \infty} \frac{1}{n} \log \bbE\left[e^{\lambda L_{\lf ns \rf}^\mu (nt)}\right] = \infty.
\end{align*}
\end{corollary}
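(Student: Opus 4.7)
The plan is to exhibit, for every sufficiently large $n$, a lower bound on $\bbE[e^{\lambda L_{\lf ns \rf}^\mu(nt)}]$ which is already $+\infty$ when $\lambda \geq \mu$; dividing by $n$ and sending $n\to\infty$ then gives the conclusion at once. The argument only uses one branch of the coupling (\ref{eq:BDPstatcoupling}) together with Burke's theorem, so no real technical obstacle is expected.

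First I would restrict the variational expression for $L_n^\mu(t)$ to a single path by retaining the $j=n$ term in the second maximum of (\ref{eq:BDPstatcoupling}). This gives
\[
L_n^\mu(t) \geq L_n^\mu(0) + L_{n,n}(0,t) = \sum_{k=1}^n q_k^\mu(0) + B_n(t),
\]
where the first equality follows from (\ref{eq:BDPstatdecomp}) at $t=0$ (using $B_0(0) = 0$) and the second from the fact that the only admissible path from $(0,n)$ to $(t,n)$ contributes $B_n(0,t) = B_n(t)$.

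Next I would invoke Theorem \ref{thm:BDPBurke}, which asserts that $\{q_k^\mu(0)\}_{k\geq 1}$ are i.i.d.\ Exp$(\mu)$. As explained in the paragraph following (\ref{eq:BDPcoupling}), the sum $\sum_{k=1}^n q_k^\mu(0)$ is measurable with respect to $\sigma(B_j(s): s \leq 0, j \in \bbZ_+)$, whereas $B_n(t) - B_n(0)$ lies in $\sigma(B_n(s): s \geq 0)$, and these $\sigma$-algebras are independent. Factoring moment generating functions therefore yields
\[
\bbE\bigl[e^{\lambda L_n^\mu(t)}\bigr] \;\geq\; \bbE\bigl[e^{\lambda \sum_{k=1}^n q_k^\mu(0)}\bigr] \cdot \bbE\bigl[e^{\lambda B_n(t)}\bigr].
\]

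Finally, for $\lambda \geq \mu$ the first factor on the right is $+\infty$, since the Exp$(\mu)$ moment generating function diverges at its critical value, while the second equals the strictly positive constant $e^{\lambda^2 t/2}$. Substituting $\lf ns \rf$ for $n$ (which is at least $1$ once $n \geq 1/s$) and $nt$ for $t$ in the bound shows that $\bbE[e^{\lambda L_{\lf ns \rf}^\mu(nt)}] = +\infty$ for every sufficiently large $n$, so $\frac{1}{n}\log \bbE[e^{\lambda L_{\lf ns \rf}^\mu(nt)}] = +\infty$ eventually and the limit is $+\infty$.
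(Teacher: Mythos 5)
Your argument is correct, but it is not the route the paper takes. The paper proves this corollary by exploiting the monotonicity of $\lambda \mapsto \log \bbE\bigl[e^{\lambda L_{\lf ns \rf}^\mu(nt)}\bigr]$ together with the explicit formula from Lemma \ref{lem:BDPstatLyapunov1}: for any $\lambda' < \mu$ one has $\liminf_n n^{-1} \log \bbE\bigl[e^{\mu L_{\lf ns \rf}^\mu(nt)}\bigr] \geq t\left(\frac{(\lambda')^2}{2} + \lambda' \mu\right) + s \log \frac{\mu}{\mu-\lambda'}$, and the right side diverges as $\lambda' \uparrow \mu$. Your approach is more elementary and in a sense stronger: by keeping only the $j = \lf ns \rf$ term in the second maximum of \eqref{eq:BDPstatcoupling}, applying \eqref{eq:BDPstatdecomp} at $t=0$, and using the independence of the pre-time-zero and post-time-zero $\sigma$-algebras, you show $\bbE\bigl[e^{\lambda L_{\lf ns \rf}^\mu(nt)}\bigr] \geq \bbE\bigl[e^{\lambda \sum_{k=1}^{\lf ns \rf} q_k^\mu(0)}\bigr]\,\bbE\bigl[e^{\lambda B_{\lf ns \rf}(nt)}\bigr] = +\infty$ for every $n$ with $\lf ns \rf \geq 1$, because the Exponential$(\mu)$ moment generating function is already infinite at $\lambda = \mu$. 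So you obtain that the prelimit quantity itself equals $+\infty$ for each sufficiently large $n$, not merely that its normalized logarithm diverges, and you only need Burke's theorem plus the coupling, not the computation from Lemma \ref{lem:BDPstatLyapunov1}. What the paper's proof buys is brevity in context: the corollary is presented as an immediate consequence of the lemma just proved, so invoking its formula costs nothing. Both arguments are valid.
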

\begin{proof}
The function $\lambda \mapsto \log \bbE\left[e^{\lambda L_{\lf ns \rf}^\mu (nt)}\right]$ is non-decreasing. It follows that for any $\lambda < \mu$,
\begin{align*}
\liminf_{n \to \infty} \frac{1}{n} \log \bbE\left[e^{\mu L_{\lf ns \rf}^\mu (nt)}\right] \geq t\left(\frac{\lambda^2}{2} + \lambda \mu\right) + s \log \frac{\mu}{\mu-\lambda}.
\end{align*}
Sending $\lambda \uparrow \mu$ gives the result.
\end{proof}



\providecommand{\bysame}{\leavevmode\hbox to3em{\hrulefill}\thinspace}
\providecommand{\MR}{\relax\ifhmode\unskip\space\fi MR }
\providecommand{\MRhref}[2]{%
  \href{http://www.ams.org/mathscinet-getitem?mr=#1}{#2}
}
\providecommand{\href}[2]{#2}
\begin{thebibliography}{10}

\bibitem{BS05}
Jinho Baik and Toufic~M. Suidan, \emph{A {GUE} central limit theorem and
  universality of directed first and last passage site percolation}, Int. Math.
  Res. Not. (2005), no.~6, 325--337. \MR{2131383}

\bibitem{Bar01}
Yu. Baryshnikov, \emph{G{UE}s and queues}, Probab. Theory Related Fields
  \textbf{119} (2001), no.~2, 256--274. \MR{1818248}

\bibitem{BDG01}
G.~Ben~Arous, A.~Dembo, and A.~Guionnet, \emph{Aging of spherical spin
  glasses}, Probab. Theory Related Fields \textbf{120} (2001), no.~1, 1--67.
  \MR{1856194}

\bibitem{Ben-Gui-97}
G.~Ben~Arous and A.~Guionnet, \emph{Large deviations for {W}igner's law and
  {V}oiculescu's non-commutative entropy}, Probab. Theory Related Fields
  \textbf{108} (1997), no.~4, 517--542. \MR{1465640}

\bibitem{BM05}
Thierry Bodineau and James Martin, \emph{A universality property for
  last-passage percolation paths close to the axis}, Electron. Comm. Probab.
  \textbf{10} (2005), 105--112 (electronic). \MR{2150699}

\bibitem{Cie-Geo-18-}
Federico Ciech and Nicos Georgiou, \emph{A large deviation principle for last
  passage times in an asymmetric bernoulli potential}, arxiv:1810.11377 (2018).

\bibitem{Emr-16}
Elnur Emrah, \emph{The shape functions of certain exactly solvable
  inhomogeneous planar corner growth models}, Electron. Comm. Probab.
  \textbf{21} (2016), no.~42, 1--16.

\bibitem{Emr-Jan-17}
Elnur Emrah and Chris Janjigian, \emph{Large deviations for some corner growth
  models with inhomogeneity}, Markov Process. Related Fields \textbf{23}
  (2017), 267--312.

\bibitem{GS13}
Nicos Georgiou and Timo Sepp{\"a}l{\"a}inen, \emph{Large deviation rate
  functions for the partition function in a log-gamma distributed random
  potential}, Ann. Probab. \textbf{41} (2013), no.~6, 4248--4286. \MR{3161474}

\bibitem{GW91}
Peter~W. Glynn and Ward Whitt, \emph{Departures from many queues in series},
  Ann. Appl. Probab. \textbf{1} (1991), no.~4, 546--572. \MR{1129774}

\bibitem{GTW01}
Janko Gravner, Craig~A. Tracy, and Harold Widom, \emph{Limit theorems for
  height fluctuations in a class of discrete space and time growth models}, J.
  Statist. Phys. \textbf{102} (2001), no.~5-6, 1085--1132. \MR{1830441}

\bibitem{HMO02}
B.~M. Hambly, James~B. Martin, and Neil O'Connell, \emph{Concentration results
  for a {B}rownian directed percolation problem}, Stochastic Process. Appl.
  \textbf{102} (2002), no.~2, 207--220. \MR{1935124}

\bibitem{Har-Wil-92}
J.~M. Harrison and R.~J. Williams, \emph{Brownian models of feedforward
  queueing networks: quasireversibility and product form solutions}, Ann. Appl.
  Probab. \textbf{2} (1992), no.~2, 263--293. \MR{1161055}

\bibitem{Ib11}
Jean-Paul Ibrahim, \emph{Large deviations for directed percolation on a thin
  rectangle}, ESAIM Probab. Stat. \textbf{15} (2011), 217--232. \MR{2870513}

\bibitem{Jan15}
Chris Janjigian, \emph{Large deviations of the free energy in the
  {O}'{C}onnell-{Y}or polymer}, J. Stat. Phys. \textbf{160} (2015), no.~4,
  1054--1080. \MR{3373651}

\bibitem{Kuczma}
Marek Kuczma, \emph{An introduction to the theory of functional equations and
  inequalities}, second ed., Birkh\"auser Verlag, Basel, 2009, Edited and with
  a preface by Attila Gil{\'a}nyi. \MR{2467621}

\bibitem{Ledoux}
M.~Ledoux, \emph{Deviation inequalities on largest eigenvalues}, Geometric
  aspects of functional analysis, Lecture Notes in Math., vol. 1910, Springer,
  Berlin, 2007, pp.~167--219. \MR{2349607}

\bibitem{OY01}
Neil O'Connell and Marc Yor, \emph{Brownian analogues of {B}urke's theorem},
  Stochastic Process. Appl. \textbf{96} (2001), no.~2, 285--304. \MR{1865759}

\bibitem{OY02}
\bysame, \emph{A representation for non-colliding random walks}, Electron.
  Comm. Probab. \textbf{7} (2002), 1--12 (electronic). \MR{1887169}

\bibitem{Rassoul-AghaSeppalainen}
Firas Rassoul-Agha and Timo Sepp{\"a}l{\"a}inen, \emph{A course on large
  deviations with an introduction to {G}ibbs measures}, Graduate Studies in
  Mathematics, vol. 162, American Mathematical Society, Providence, RI, 2015.
  \MR{3309619}

\bibitem{Rock}
R.~Tyrrell Rockafellar, \emph{Convex analysis}, Princeton Mathematical Series,
  No. 28, Princeton University Press, Princeton, N.J., 1970. \MR{0274683}

\bibitem{Sep98c}
T.~Sepp{\"a}l{\"a}inen, \emph{Coupling the totally asymmetric simple exclusion
  process with a moving interface}, Markov Process. Related Fields \textbf{4}
  (1998), no.~4, 593--628, I Brazilian School in Probability (Rio de Janeiro,
  1997). \MR{1677061}

\bibitem{Sep98b}
Timo Sepp{\"a}l{\"a}inen, \emph{Large deviations for increasing sequences on
  the plane}, Probab. Theory Related Fields \textbf{112} (1998), no.~2,
  221--244. \MR{1653841}

\bibitem{TW94}
Craig~A. Tracy and Harold Widom, \emph{Level-spacing distributions and the
  {A}iry kernel}, Comm. Math. Phys. \textbf{159} (1994), no.~1, 151--174.
  \MR{1257246}

\end{thebibliography}


\providecommand{\bysame}{\leavevmode\hbox to3em{\hrulefill}\thinspace}
\providecommand{\MR}{\relax\ifhmode\unskip\space\fi MR }
\providecommand{\MRhref}[2]{%
  \href{http://www.ams.org/mathscinet-getitem?mr=#1}{#2}
}
\providecommand{\href}[2]{#2}


\ACKNO{The author is grateful to Firas Rassoul-Agha and an anonymous referee for helpful comments on previous versions of the manuscript.}


\end{document}